\documentclass{amsart}

\usepackage[T1]{fontenc}
\usepackage{graphicx}

\usepackage{amsmath, amsfonts, amssymb, amsaddr}

\usepackage{enumitem}

\usepackage{hyperref}

\usepackage{tikz}
\usetikzlibrary{positioning,arrows.meta}

\usepackage{amsmath,amssymb, amsfonts}
\usepackage{amsthm}
\usepackage{hyperref}

\usepackage{xy} 
\input xy \xyoption{all}

\hypersetup{
  colorlinks   = true, 
  urlcolor     = blue, 
  linkcolor    = blue, 
  citecolor   = red 
}

\usepackage[msc-links]{amsrefs}

\usepackage[capitalise]{cleveref}

\crefname{thm}{Thm.}{}
\crefname{prop}{Prop.}{}
\crefname{lem}{Lem.}{}
\crefname{cor}{Cor.}{}

\newtheorem*{thm*}{Theorem}
\newtheorem{thm}{Theorem}
\newtheorem{prop}{Proposition}
\newtheorem{lem}{Lemma}

\newtheorem{rem}{Remark}
\newtheorem{exa}{Example}

\theoremstyle{defn}
\newtheorem{defn}{Definition}

\theoremstyle{rem}

\newcommand\Z{\mathbb Z} 				
\newcommand\PP{\mathbb P} 				

\newcommand{\TP}{\mathbb{T}\PP}

\newcommand{\Aut}{\mathrm{Aut}}

\newcommand{\cM}{\mathcal{M}}

\newcommand{\trop}{\mathrm{trop}}
\newcommand{\Trop}{\mathrm{Trop}}

\title[Tropical Rational Functions of Degree 3]{The Tropical Moduli Space of Degree-3 Rational Maps}

\author{Tony Shaska \and  Mohammad-Reza Siadat}
\address{Department of Computer Science and Engineering, \\
School of Engineering and Computer Science\\
Oakland University, Rochester, MI, 48309}
\email{shaska@oakland.edu}

\email{siadat@oakland.edu}

\begin{document}

\maketitle

\begin{abstract}
We construct and study the tropical moduli space \(\cM_3^{\trop}\) of degree-$3$ tropical rational maps \(\TP^1 \to \TP^1\) up to post-composition. Using a combinatorial description in terms of slope sequences, we classify all such maps and show that there are exactly ten combinatorial types. This yields a polyhedral model of \(\cM_3^{\trop}\) parametrized by gap lengths between break points.

We determine the automorphism groups and obtain a stratification by explicit linear conditions. We also relate the construction to tropical Hurwitz theory and describe a natural compactification via degenerations of the parameters.
\end{abstract}


\section{Introduction}\label{sec-1}
The moduli space of rational maps between projective lines is a classical object with deep connections to algebraic dynamics, Hurwitz theory, and invariant theory. For each degree \(d\), the space \(\cM_d^1\) parametrizes equivalence classes of degree-\(d\) rational maps \(\PP^1 \to \PP^1\) up to post-composition by \(\mathrm{PGL}(2)\). In degree \(d=3\), this space admits a particularly explicit invariant-theoretic description: recent work \cite{BSSh25} expresses \(\cM_3^1\) in terms of weighted \(\mathrm{SL}(2)\)-invariants and describes its stratification by automorphism groups via algebraic relations among these invariants.

Tropical geometry provides a combinatorial framework in which algebraic objects are replaced by piecewise-linear counterparts. In this setting, rational functions on \(\PP^1\) correspond to piecewise-linear maps on the tropical projective line \(\TP^1\), and many features of classical geometry admit tropical analogues. The aim of this paper is to develop, in degree \(3\), a detailed combinatorial model for the moduli space of such maps.

We construct and analyze the tropical moduli space \(\cM_3^{\trop}\) of degree-$3$ tropical rational maps \(\TP^1 \to \TP^1\), considered up to post-composition by automorphisms of the target. Our approach is entirely combinatorial: maps are encoded by their slope sequences and the positions of their break points. This allows a complete classification and an explicit description of the resulting moduli space.

The main results can be summarized as follows. We classify all degree-$3$ tropical rational maps by their slope data, showing that there are exactly ten combinatorial types, distinguishing generic maps with four simple break points from degenerate configurations obtained by collisions. Using this classification, we construct the moduli space \(\cM_3^{\trop}\) as a polyhedral complex whose cells are parametrized by the gap lengths between consecutive break points, with gluing determined by natural degenerations of tropical maps. We determine the automorphism groups of these maps and obtain a stratification of \(\cM_3^{\trop}\) according to symmetry, showing that nontrivial symmetries arise only under explicit linear relations among the parameters. We further relate this construction to tropical Hurwitz theory by enumerating degree-$3$ covers with prescribed ramification, recovering the expected count, and we describe a natural compactification obtained by allowing degenerations of the gap parameters. Finally, we explain how this combinatorial model relates to the classical moduli space \(\cM_3^1\) via tropicalization, providing a polyhedral model that reflects key features of the invariant-theoretic picture.

The degree-$3$ case provides a setting in which all aspects of the theory can be made completely explicit. It illustrates how classical structures admit tropical analogues in which algebraic conditions are replaced by linear relations and geometric constructions become combinatorial.

The degree-3 tropical rational maps coincide with the continuous piecewise-linear functions realized by shallow feedforward ReLU networks with four (possibly coincident) activation thresholds. This tropical perspective on ReLU networks aligns with and extends the theory of graded neural networks developed in \cite{sh-89}, furnishing a geometrically natural parameter space for their functional shapes and canonical forms independent of weights.
The sorted distinct thresholds 
\[
\theta_1 < \theta_2 < \theta_3 < \theta_4
\]
 together with the slope changes at each \(\theta_j\) determine a weighted tropical curve \(C\) (an abstract tropical curve in the sense of \cite{JKM10}, Definition~4). The gap lengths \(\ell_i = \theta_{i+1} - \theta_i > 0\) are the edge lengths of \(C\), and the multiplicities on the edges are the local net changes in slope. The automorphism group \(\Aut(C)\) of this weighted curve acts on the moduli space \(\cM_3^{\trop}\) and induces its natural stratification by symmetry type. The vanishing theorems of  \cite{JKM10} (in particular the tropical Hilbert Theorem~90 \(H^1(G,M(C))=0\) and \(H^1(G,\operatorname{Prin}(C))=0\)) then guarantee the existence of canonical \(G\)-invariant representatives for every \(G\)-invariant divisor class. These canonical forms are precisely the functional equivalence classes of ReLU networks, independent of the concrete weights.

We conclude by noting a connection with piecewise-linear models arising in applied contexts. From this perspective, the moduli space \(\cM_3^{\trop}\) provides a natural parameter space for such functions up to equivalence.

This viewpoint extends beyond the one-dimensional setting considered here. In \cite{ShSi26}, we generalize the combinatorial and moduli-theoretic constructions developed in this paper to higher-dimensional piecewise-linear models, with applications to the geometry of neural network function spaces. The present paper provides the foundational case in which all structures can be described explicitly.

The paper is organized as follows. \cref{sec-2}--\cref{sec-3} review the necessary background on tropical semirings, tropical rational functions, and the tropical projective line. \cref{sec-4} contains the classification of combinatorial types. \cref{sec-5} constructs the moduli space as a polyhedral complex. \cref{sec-6} analyzes automorphism groups and the resulting stratification. \cref{sec-7} relates the construction to tropical Hurwitz theory. \cref{sec-8} develops the compactification, and \cref{sec-9} discusses the relation to the classical moduli space via tropicalization.

\section{Background on Tropical Geometry}\label{sec-2}
Tropical geometry provides a powerful combinatorial framework for studying algebraic varieties and their degenerations by replacing the classical operations of addition and multiplication with the max-plus (or min-plus) semiring operations. In this paper we work exclusively with the \emph{max-plus tropical semiring}, which we now recall.

\subsection{The Tropical Semiring}
The foundational object is the tropical semiring \(\mathbb{T} = (\mathbb{R} \cup \{-\infty\}, \oplus, \odot)\), defined by
\[
a \oplus b = \max(a,b), \qquad a \odot b = a + b,
\]
where \(-\infty\) acts as the additive identity. Tropical powers are written \(a^{\odot k} = ka\). Although \(\mathbb{T}\) has no additive inverses, we will use classical subtraction \(a-b\) when convenient; this corresponds to division in the multiplicative structure \cite[Section~2.1]{MS15}.

A \emph{tropical polynomial} of degree \(d\) in one variable is a formal expression
\[
f(x) = \bigoplus_{i=0}^d a_i \odot x^{\odot i} = \max_{0\leq i\leq d}(a_i + i x),
\]
with \(a_i \in \mathbb{T}\) and \(a_d \neq -\infty\). As a real-valued function \(f\colon \mathbb{R}\to\mathbb{R}\), every tropical polynomial is continuous and piecewise-linear with integer slopes. The points where the maximizing term changes are called \emph{break points} (or \emph{kinks}). At a break point \(x_0\) the slope increases by the difference of the indices of the two (or more) terms that attain the maximum; a break point of multiplicity \(m\) corresponds to a root of multiplicity \(m\) in the tropical sense \cite[Section~2.2]{MS15}.

\subsection{Tropical Rational Functions}
A \emph{tropical rational function} on \(\TP^1\) is a continuous piecewise-linear function \(\varphi\colon \mathbb{R}\to\mathbb{R}\) with integer slopes. Such a function extends uniquely to a continuous map \(\varphi\colon \TP^1 \to \TP^1\).

More generally, in the sense of \cite[Definition~2.1]{CJM11}, a \emph{tropical morphism} (or harmonic morphism) between metric graphs is a continuous map that is integer-affine on each edge and satisfies a balancing condition at every point. In the present setting, where both source and target are \(\TP^1\), this reduces to requiring that \(\varphi\) is piecewise-linear with integer slopes.

The \emph{degree} of \(\varphi\) is defined as the (constant) outgoing slope along each unbounded end:
\[
\deg(\varphi) = d,
\]
so that \(\varphi\) has slope \(d\) at both \(+\infty\) and \(-\infty\) \cite{CJM11}. This agrees with the standard notion of degree for harmonic morphisms of metric graphs.

Two tropical rational functions are considered equivalent if they differ by post-composition with an automorphism of \(\TP^1\). For the moduli space \(\cM_3^{\trop}\), we therefore work with normalized representatives of degree \(3\), for which the asymptotic slopes satisfy
\(
s_0 = s_k = 3,
\)
where the slope sequence \((s_0,s_1,\dots,s_k)\) records the slopes of \(\varphi\) on the linear segments between its break points.

Under this description, a tropical rational function is completely determined (up to an additive constant) by a finite ordered set of break points \(x_1 < \cdots < x_k \in \mathbb{R}\) together with its slope sequence.

\subsection{Ramification and the Tropical Riemann--Hurwitz Formula}
The local behavior of a tropical rational function is governed by changes in slope. If the slope on \((x_{i-1},x_i)\) is \(s_{i-1}\) and on \((x_i,x_{i+1})\) is \(s_i\), the \emph{ramification weight} at \(x_i\) is defined to be
\[
r_i = |s_i - s_{i-1}|.
\]
The total ramification of \(\varphi\) is
\[
R = \sum_{i=1}^k r_i = \sum_{i=1}^k |s_i - s_{i-1}|.
\]
For a degree-\(d\) tropical morphism \(\varphi\colon \TP^1 \to \TP^1\), the tropical Riemann--Hurwitz formula specializes to   \(R = 2d - 2.\)
Thus, for maps of degree \(3\), we obtain the constraint
\[
\sum_{i=1}^k |s_i - s_{i-1}| = 4.
\]

\section{The Tropical Projective Line and Its Automorphisms} \label{sec-3}

The target space of our tropical rational maps is the tropical projective line \(\TP^1\), which serves as the tropical analogue of the classical projective line \(\PP^1\). We now describe its structure and its automorphism group, which will be used to define the moduli space \(\cM_3^{\trop}\).

\subsection{The Tropical Projective Line}

\begin{defn}
The \emph{tropical projective line} \(\TP^1\) is the set \(\mathbb{R} \cup \{-\infty, +\infty\}\), which is homeomorphic to a closed interval. It carries the structure of a metric space on \(\mathbb{R}\) with the standard Euclidean metric, together with the two boundary points \(\pm\infty\). Equivalently, one may realize
\[
\TP^1 = (\mathbb{T}^2 \setminus \{(-\infty,-\infty)\}) / \sim,
\]
where \((a,b) \sim (a+\lambda, b+\lambda)\) for all \(\lambda \in \mathbb{R}\).
\end{defn}

A tropical rational function \(\varphi\) of degree \(d\) extends uniquely to a continuous map \(\varphi\colon \TP^1 \to \TP^1\). Since a degree-\(d\) map has slope \(d\) along both unbounded ends (see \cref{sec-2}), the image of each end is again an end of \(\TP^1\). After fixing the orientation, we may normalize so that
\[
\varphi(+\infty) = +\infty, \qquad \varphi(-\infty) = -\infty.
\]
With this normalization, \(\varphi\) is a proper map of degree \(d\), in direct analogy with a classical rational map \(\PP^1 \to \PP^1\).

\subsection{Automorphisms of \(\TP^1\)}
The group of automorphisms of \(\TP^1\) plays the role of \(\mathrm{PGL}(2,\mathbb{C})\) in the classical setting.
\begin{defn}[Automorphism group of a tropical rational map]\label{def:aut-phi}
Let \(\varphi:\TP^1\to\TP^1\) be a tropical rational map of degree \(d\ge1\). Its \emph{automorphism group} \(\Aut(\varphi)\) is the stabilizer of \(\varphi\) under the simultaneous (diagonal) action of \(\Aut(\TP^1)\) by pre-composition on the source and post-composition on the target:
\[
\Aut(\varphi)\;:=\;\bigl\{(\alpha,\beta)\in\Aut(\TP^1)\times\Aut(\TP^1)\bigm|\ \beta\circ\varphi=\varphi\circ\alpha\bigr\}.
\]
Concretely, elements are pairs of affine maps
\[
\alpha(x)=\varepsilon x+a,\qquad \beta(y)=\delta y+b
\]
with \(\varepsilon,\delta\in\{\pm1\}\) and \(a,b\in\mathbb{R}\), satisfying the functional equation
\[
\varphi(\varepsilon x+a)=\delta\varphi(x)+b\qquad\text{for all }x\in\mathbb{R}.
\]
The group \(\Aut(\varphi)\) records precisely the input/output affine redundancies realized by the automorphism group \(\Aut(C)\) of the underlying weighted tropical curve \(C\) (determined by the activation thresholds \(\theta_j\) and the slope changes of \(\varphi\)). The strata of \(\cM_3^{\trop}\) are the orbits under the natural action of \(\Aut(C)\).
\end{defn}
\begin{prop}
The automorphism group \(\Aut(\TP^1)\) consists precisely of the maps
\[
x \mapsto \epsilon x + b, \qquad \epsilon \in \{+1,-1\},\ b \in \mathbb{R}.
\]
This group is isomorphic to the semidirect product \(\mathbb{R} \rtimes \Z/2\Z\).
\end{prop}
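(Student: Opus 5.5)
We take \(\Aut(\TP^1)\) to mean the self-maps \(\TP^1 \to \TP^1\) that are bijective, continuous, and integer-affine (tropical morphisms of degree one) on \(\mathbb{R}\), together with their inverses. The proof then splits into three steps: the listed maps are automorphisms, every automorphism has this form, and the group structure is \(\mathbb{R}\rtimes\Z/2\Z\).

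For the first step, a map \(x\mapsto \epsilon x+b\) with \(\epsilon=\pm1\) is affine with integer slope and is a bijection of \(\mathbb{R}\). Its inverse \(y\mapsto \epsilon(y-b)\) has the same form. It extends continuously to \(\TP^1\), fixing \(\pm\infty\) when \(\epsilon=1\) and swapping them when \(\epsilon=-1\). In the projective model \((a,b)\sim(a+\lambda,b+\lambda)\), translation by \(b\) is tropical multiplication of one coordinate, and \(\epsilon=-1\) is the coordinate swap \((a,b)\mapsto(b,a)\). So these maps respect the tropical structure.

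The second step is the main obstacle, because it depends on pinning down what "automorphism" means. We take the definition above: \(\alpha\) is continuous, piecewise-linear with integer slopes on \(\mathbb{R}\), has an inverse of the same type, and preserves the Euclidean metric on \(\mathbb{R}\) as stated in the definition of \(\TP^1\). Being a homeomorphism of the closed interval \(\TP^1\), \(\alpha\) is strictly monotone on \(\mathbb{R}\). Hence all its slopes are nonzero integers of a common sign. The inverse is also integer piecewise-linear, and at corresponding points its slopes are the reciprocals \(1/s\). A nonzero integer \(s\) has integer reciprocal only when \(s=\pm1\), so every slope is \(\pm1\), and monotonicity forces one common sign \(\epsilon\). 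A continuous piecewise-linear function with constant slope \(\epsilon\) is \(\epsilon x+b\). If instead one uses the metric-preservation convention, isometries of \(\mathbb{R}\) are exactly \(x\mapsto\pm x+b\), which gives the same conclusion directly. We will note both routes so the proposition holds under either reading of the definition.

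For the third step, the map \(\Aut(\TP^1)\to\{\pm1\}\cong\Z/2\Z\) given by \(\epsilon x+b\mapsto\epsilon\) is a homomorphism, since composing gives \(\epsilon\epsilon'x+(\epsilon b'+b)\). Its kernel is the translations, isomorphic to \(\mathbb{R}\). The map is split by the section \(\epsilon\mapsto(x\mapsto\epsilon x)\). Conjugating a translation by \(x\mapsto -x\) gives \(x\mapsto x-b\), so \(\Z/2\Z\) acts on \(\mathbb{R}\) by negation. This identifies the group as \(\mathbb{R}\rtimes\Z/2\Z\), the infinite dihedral-type group of isometries of the line.
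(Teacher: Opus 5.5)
Your proof is correct. It reaches the paper's conclusion by a somewhat different route at the key step. The paper argues in one line that an automorphism is a harmonic morphism of degree \(1\), hence has slopes \(\pm1\) on every segment, and that such a piecewise-linear homeomorphism must be globally affine. It simply asserts the identification with \(\mathbb{R}\rtimes\Z/2\Z\).

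You obtain \(|s|=1\) without appealing to the degree theory of harmonic morphisms. Being a homeomorphism gives strict monotonicity, hence nonzero slopes of a single sign. Requiring the inverse to have integer slopes then forces \(1/s\in\Z\), so \(s=\pm1\); the isometry reading of the definition is an alternative shortcut.

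The paper's version is shorter, but it depends on accepting that an automorphism is a degree-one harmonic morphism. It also leaves implicit the monotonicity argument that rules out a slope sign change in ``must be globally affine.'' Your version is elementary and self-contained, and it makes that step explicit. It also supplies the group-theoretic verification the paper omits: the sign homomorphism, its kernel of translations, the splitting \(\epsilon\mapsto(x\mapsto\epsilon x)\), and the negation action.

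The only blemish is presentational. You fold both inverse-integrality and metric preservation into a single definition, then treat them as alternative routes. It would be cleaner to fix one definition and remark on the other.
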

\begin{proof}
An automorphism of \(\TP^1\) is a harmonic morphism of degree \(1\), hence a piecewise-linear homeomorphism with integer slopes equal to \(\pm 1\) on every segment. Such a map must be globally affine, hence of the form \(x \mapsto \pm x + b\). These maps extend continuously to \(\pm\infty\) and exhaust all possibilities.
\end{proof}
We denote the identity element by \(T_0\). The group \(\Aut(\TP^1)\) acts on the space of tropical rational functions by \emph{post-composition}:
\[
\psi \cdot \varphi = \psi \circ \varphi, \qquad \psi \in \Aut(\TP^1).
\]
\begin{rem}
Classically, \(\mathrm{PGL}(2,\mathbb{C})\) acts on rational maps by Möbius transformations. The tropical automorphism group \(\mathbb{R} \rtimes \Z/2\Z\) is its natural combinatorial analogue: translations correspond to affine shifts, while the involution \(x \mapsto -x\) plays the role of the Weyl element. This group is considerably smaller than its classical counterpart, which simplifies many quotient constructions.
\end{rem}

\begin{rem}
Finite subgroups of \(\mathrm{PGL}(2,\mathbb{C})\) are well-known: it is isomorphic to one of the cyclic groups \(C_n\), dihedral groups \(D_n\), or one of the exceptional polyhedral groups \(A_4\), \(S_4\), \(A_5\).

In the tropical setting the ambient group \(\Aut(\TP^1)\cong\mathbb{R}\rtimes\mathbb{Z}/2\mathbb{Z}\) admits only the trivial group and \(\mathbb{Z}/2\mathbb{Z}\) as finite subgroups (the translation factor \(\mathbb{R}\) contributes no nontrivial finite-order elements). Consequently, for any tropical rational map \(\varphi\) the stabilizer \(\Aut(\varphi)\)---being a subgroup of \(\Aut(\TP^1)\times\Aut(\TP^1)\)--can only be the trivial group or \(\mathbb{Z}/2\mathbb{Z}\). This strong restriction, which follows directly from the piecewise-linear structure and the tropical Riemann--Hurwitz condition \(\sum |s_i-s_{i-1}|=4\), yields the simple explicit stratification of \(\mathcal{M}_3^{\trop}\) by symmetry type and provides a combinatorial criterion for detecting weight-space symmetries and parameter redundancies in the associated ReLU networks of effective degree 3.
\end{rem}

\subsection{Pre-composition and the Definition of the Moduli Space}

There is a second natural action of \(\Aut(\TP^1)\) on tropical rational functions, given by \emph{pre-composition} on the source \(\TP^1\). In analogy with the classical construction, we define the tropical moduli space  \(\cM_3^{\trop}\)
to parametrize equivalence classes of degree-\(3\) tropical rational maps \(\TP^1 \to \TP^1\) under post-composition by \(\Aut(\TP^1)\). Pre-composition (source reparametrization) will appear naturally when we relate \(\cM_3^{\trop}\) to tropical Hurwitz spaces in \cref{sec-7}.

\section{Combinatorial Types of Degree-$3$ Tropical Maps}
\label{sec-4}

A degree-$3$ tropical rational function \(\varphi\colon \TP^1\to\TP^1\) is completely determined, up to an additive constant, by the locations of its break points and the sequence of slopes between them. In this section we classify all possible combinatorial types of such maps. These types form the combinatorial building blocks of the moduli space \(\cM_3^{\trop}\).

\subsection{Slope Sequences}

Let \(x_1<x_2<\dots<x_k\in\mathbb{R}\) be the break points of \(\varphi\). Denote by \(s_i\) the slope of \(\varphi\) on \((x_i,x_{i+1})\), with \(x_0=-\infty\) and \(x_{k+1}=+\infty\). Thus \(\varphi\) has slope \(s_0\) on \((-\infty,x_1)\) and slope \(s_k\) on \((x_k,+\infty)\).

The slope sequence \((s_0,s_1,\dots,s_k)\) must satisfy:

\begin{enumerate}[label=(\roman*)]
\item \textbf{Degree condition.} \(s_0 = s_k = 3\).
\item \textbf{Integer slopes.} \(s_i \in \Z\).
\item \textbf{Riemann--Hurwitz constraint.} In the present one-dimensional setting, the Riemann--Hurwitz formula reduces to
\[
\sum_{i=1}^k |s_i - s_{i-1}| = 2\cdot 3 - 2 = 4.
\]
\end{enumerate}

Moreover, since \(\varphi\) is a degree-$3$ proper map, all slopes must be positive. Hence \(s_i \geq 1\) for all \(i\).
Since \(\varphi\) is a proper map of degree 3, it must send both ends of \(\TP^1\) to the same end with positive orientation; therefore all slopes \(s_i\) are positive, i.e., \(s_i\ge 1\).
These conditions completely determine the admissible slope sequences.

\begin{lem}
\label{lem:slopebound}
For any admissible slope sequence, all slopes satisfy
\[
1 \le s_i \le 5.
\]
\end{lem}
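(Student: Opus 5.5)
The lower bound \(s_i \ge 1\) is already one of the admissibility conditions imposed just before the statement: positivity of all slopes for a proper degree-\(3\) map. So only the upper bound \(s_i \le 5\) needs proof. I will get it from the Riemann--Hurwitz constraint \(\sum_{i=1}^k |s_i - s_{i-1}| = 4\) together with the boundary conditions \(s_0 = s_k = 3\).

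Fix an index \(j\) with \(0 \le j \le k\). Split the total variation into the part before \(j\) and the part after \(j\):
\[
4 = \sum_{i=1}^{j} |s_i - s_{i-1}| + \sum_{i=j+1}^{k} |s_i - s_{i-1}|.
\]
By the triangle inequality (telescoping), the first sum is at least \(|s_j - s_0| = |s_j - 3|\). Likewise the second sum is at least \(|s_k - s_j| = |s_j - 3|\). Hence
\[
2|s_j - 3| \le 4, \qquad\text{that is,}\qquad |s_j - 3| \le 2.
\]
This gives \(1 \le s_j \le 5\) directly.

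This argument actually recovers the lower bound \(s_j \ge 1\) independently of the positivity assumption, which is a useful consistency check. It also shows that both bounds are attained: \(s_j = 5\) occurs for the sequence \((3,5,3)\), and \(s_j = 1\) occurs for \((3,1,3)\). The only point requiring care is to apply the telescoping inequality on both sides of \(j\), using that the sequence starts and ends at \(3\). Using only one side would give the weaker bound \(|s_j - 3| \le 4\). There is no real obstacle beyond this.
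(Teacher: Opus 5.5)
Your proof is correct and takes essentially the same approach as the paper, which also argues that reaching \(s_j\) from \(s_0=3\) and returning to \(s_k=3\) requires total variation at least \(2|s_j-3|\), and this must not exceed \(4\). Your split of the sum at \(j\) with the telescoping triangle inequality on each side simply makes that step explicit, and it yields both bounds at once, as the paper's derivation from \(|s_i-3|\le 2\) does.
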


\begin{proof}
Since \(s_0 = s_k = 3\) and the total variation is \(4\), we have
\[
|s_i - 3| \le 2
\]
for all \(i\). Indeed, if \(|s_j - 3| > 2\) for some \(j\), then the total variation required to reach \(s_j\) and return to \(3\) would exceed \(4\), contradicting the Riemann--Hurwitz constraint. Hence \(1 \le s_i \le 5\).
\end{proof}

The slope sequence together with the ordered break points determines the combinatorial type of \(\varphi\).

\begin{defn}
Two slope sequences are considered equivalent if they differ by reversing the order, corresponding to pre-composition with the involution \(x \mapsto -x\) on the source. The \emph{combinatorial type} of \(\varphi\) is its slope sequence modulo this equivalence.
This equivalence reflects reparametrization of the source and should be distinguished from the moduli equivalence defining \(\cM_3^{\trop}\), which is given by post-composition on the target (see \cref{sec-3}).
\end{defn}

\subsection{Enumeration of Combinatorial Types}

We now enumerate all admissible slope sequences. Let
\(
\delta_i = s_i - s_{i-1}.
\)
Then the conditions above become
\[
\sum_{i=1}^k |\delta_i| = 4, \qquad \sum_{i=1}^k \delta_i = 0.
\]
Thus the problem reduces to enumerating integer sequences \((\delta_1,\dots,\delta_k)\) satisfying these conditions.

\subsubsection*{Case \(k=4\) (four simple ramification points)}

Here \(|\delta_i| = 1\) for all \(i\), and we must have two \(+1\) and two \(-1\). Up to reversal, the resulting slope sequences are:
\begin{align*}
(3,4,5,4,3), \quad
(3,4,3,4,3), \quad
(3,4,3,2,3), \quad
(3,2,3,2,3), \quad
(3,2,1,2,3).
\end{align*}

These give five combinatorial types.

\subsubsection*{Case \(k=3\) (one double and two simple ramification points)}

Here the absolute values are a permutation of \((2,1,1)\). Solving \(\sum \delta_i = 0\) yields:
\[
(3,5,4,3), \quad
(3,1,2,3), \quad
(3,4,2,3).
\]
These give three combinatorial types.

\subsubsection*{Case \(k=2\) (two double ramification points)}

Here \(|\delta_1|=|\delta_2|=2\), and \(\delta_1+\delta_2=0\), giving:
\[
(3,5,3), \quad (3,1,3).
\]
These give two combinatorial types.

\subsubsection*{Case \(k=1\)}

This case is impossible, since \(\delta_1=0\) would contradict \(|\delta_1|=4\).

\begin{thm}[Enumeration of combinatorial types]
\label{thm:enumeration}
There are exactly ten combinatorial types of degree-$3$ tropical rational maps \(\TP^1 \to \TP^1\), up to reversal of the source. They are given by the following slope sequences:
\begin{itemize}
\item \(k=4\): \((3,4,5,4,3)\), \((3,4,3,4,3)\), \((3,4,3,2,3)\), \((3,2,3,2,3)\), \((3,2,1,2,3)\),
\item \(k=3\): \((3,5,4,3)\), \((3,1,2,3)\), \((3,4,2,3)\),
\item \(k=2\): \((3,5,3)\), \((3,1,3)\).
\end{itemize}
\end{thm}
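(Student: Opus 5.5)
The plan is to prove \cref{thm:enumeration} by reducing it to a finite count of increment vectors \((\delta_1,\dots,\delta_k)\), with \(\delta_i=s_i-s_{i-1}\). A slope sequence is recovered from its increments as \(s_i=3+\delta_1+\dots+\delta_i\). Three facts come first.

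\begin{enumerate}[label=(\alph*)]
\item Each \(\delta_i\neq 0\), because \(x_i\) is a genuine break point.
\item \(\sum_i\delta_i=s_k-s_0=0\), by the degree condition.
\item \(\sum_i|\delta_i|=4\), by the Riemann--Hurwitz constraint.
\end{enumerate}

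The multiset \(\{|\delta_i|\}\) is therefore a partition of \(4\). Write \(P\) and \(N\) for the sums of the positive and negative increments in absolute value. Then (b) and (c) give \(P=N=2\).

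Next I run through the partitions of \(4\).

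\begin{itemize}
\item The partition \(\{4\}\) is impossible, since a single nonzero increment cannot sum to \(0\). This is the case \(k=1\).
\item The partition \(\{3,1\}\) is impossible, since \(P=N=2\) cannot be split between parts \(3\) and \(1\).
\item The partition \(\{2,2\}\) forces one increment \(+2\) and one increment \(-2\). This gives \((3,5,3)\) and \((3,1,3)\).
\item The partition \(\{2,1,1\}\) forces the \(2\) to carry the sign opposite to both \(1\)'s.
\item The partition \(\{1,1,1,1\}\) forces two \(+1\) and two \(-1\).
\end{itemize}

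In each surviving case I list all sign and position arrangements explicitly. For \(\{2,1,1\}\) there are \(3\) positions for the \(2\) times \(2\) sign choices, giving \(6\) sequences. For \(\{1,1,1,1\}\) there are \(\binom{4}{2}=6\) sequences.

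For every sequence I check positivity \(s_i\ge 1\). All partial sums stay in \([1,5]\) automatically, consistent with \cref{lem:slopebound}. So no candidate is discarded at this stage, and every listed sequence is admissible.

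Then I pass to the quotient by reversal. Reversing the slope sequence sends \((\delta_1,\dots,\delta_k)\) to \((-\delta_k,\dots,-\delta_1)\).

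\begin{itemize}
\item For \(k=4\), the sequences \((3,4,5,4,3)\), \((3,4,3,4,3)\), \((3,2,3,2,3)\) and \((3,2,1,2,3)\) are palindromes. The remaining two, \((3,4,3,2,3)\) and \((3,2,3,4,3)\), are swapped by reversal. This gives \(5\) classes.
\item For \(k=3\), reversal pairs the six sequences as \((3,5,4,3)\leftrightarrow(3,4,5,3)\), \((3,1,2,3)\leftrightarrow(3,2,1,3)\) and \((3,4,2,3)\leftrightarrow(3,2,4,3)\). No sequence is fixed, since \(k=3\) forces the \(2\) and a \(1\) to be exchanged. This gives \(3\) classes.
\item For \(k=2\), both sequences are palindromes, giving \(2\) classes.
\end{itemize}

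Summing, there are \(5+3+2=10\) classes, and choosing one representative from each pair reproduces exactly the list in the statement.

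The step needing the most care is the completeness of the case analysis. The key point is that the bookkeeping \(P=N=2\), together with \(\delta_i\neq0\), rules out every partition other than \(\{2,2\}\), \(\{2,1,1\}\) and \(\{1,1,1,1\}\). The other place to be careful is the reversal identification, so that classes are neither over- nor under-counted. I would handle this by writing out the full orbit of each sequence, as above, rather than arguing by symmetry.
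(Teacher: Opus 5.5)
Your proposal is correct and follows the paper's own route. Like the paper, you reduce to increment vectors with \(\sum\delta_i=0\) and \(\sum|\delta_i|=4\), rebuild slopes from \(s_0=3\), check positivity and identify under reversal; your partition-of-\(4\) bookkeeping with \(\delta_i\neq 0\) and \(P=N=2\) just makes explicit the completeness the paper leaves implicit, in particular that \(k\le 4\).

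One small correction: no \(k=3\) sequence is reversal-fixed because a fixed vector would need \(\delta_2=-\delta_2=0\). It is not because the \(2\) gets exchanged with a \(1\): for \((1,-2,1)\) the \(2\) stays in the middle and only the signs flip. Your explicit orbit list is unaffected.
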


\begin{proof}
The classification follows from enumerating all integer sequences \((\delta_i)\) with \(\sum |\delta_i| = 4\) and \(\sum \delta_i = 0\), and reconstructing slope sequences starting from \(s_0=3\). Lemma~\ref{lem:slopebound} ensures no additional cases arise. Identifying sequences up to reversal yields the stated list.
\end{proof}

\begin{exa}

We illustrate the construction with a representative of type \((3,4,5,4,3)\). Let \(x_1=0\), \(x_2=1\), \(x_3=3\), \(x_4=4\). The corresponding map \(\varphi\colon\mathbb{R}\to\mathbb{R}\) is
\[
\varphi(x)=\begin{cases}
3x & x\leq 0,\\
4x & 0\leq x\leq 1,\\
5x-1 & 1\leq x\leq 3,\\
4x+2 & 3\leq x\leq 4,\\
3x+6 & x\geq 4.
\end{cases}
\]
This realizes the prescribed slope sequence and illustrates how combinatorial data determines the map explicitly. Symmetries arise precisely when the gap lengths satisfy the palindromic condition \(\ell_1=\ell_3\), as described in \cref{sec-6}.
\end{exa}

\subsection{The underlying weighted tropical curve \(C\)}

Each degree-$3$ tropical rational map \(\varphi\) determines a weighted tropical curve \(C\) (equivalently, an abstract tropical curve \(X\) in the sense of \cite{JKM10}) as follows. The vertices of the minimal model of \(C\) are the break points \(\theta_1 < \cdots < \theta_k\) together with the two infinite points \(\pm\infty\). The inner edges are the intervals \([\theta_i,\theta_{i+1}]\) of length \(\ell_i > 0\); the two leaves have infinite length. Each edge carries the positive integer multiplicity equal to the local net change in slope across that edge (i.e., the ramification weight \(r_i\)).

The automorphism group \(\Aut(C)\) of this weighted curve is the group of multiplicity-preserving isometries of the metric graph (see \cite{JKM10}, Definition~5). For any finite subgroup \(G\le \Aut(C)\), the vanishing theorems of \cite{JKM10} apply verbatim and furnish \(G\)-invariant canonical representatives in every \(G\)-invariant divisor class on \(C\). In the ReLU interpretation these canonical forms correspond exactly to functional equivalence classes of networks up to output affine transformations.

\section{The Tropical Moduli Space \(\cM_3^{\trop}\)}
\label{sec-5}

Having classified all combinatorial types of degree-$3$ tropical rational maps in \cref{sec-4}, we now assemble these into a global parameter space. The goal of this section is to construct the tropical moduli space \(\cM_3^{\trop}\), which parametrizes degree-$3$ tropical rational maps \(\TP^1 \to \TP^1\) up to post-composition by automorphisms of the target.

The key idea is that each combinatorial type gives rise to a natural polyhedral parameter space, and the moduli space is obtained by gluing these pieces along their boundary strata according to how maps degenerate.

\subsection{Parameter spaces for fixed combinatorial type}

Fix a combinatorial type \(\tau = (s_0,\dots,s_k)\) with \(s_0 = s_k = 3\). A tropical rational map \(\varphi\) of type \(\tau\) is determined by the positions of its break points
\[
x_1 < x_2 < \cdots < x_k \in \mathbb{R}.
\]
It is convenient to replace these coordinates by the \emph{gap lengths}
\[
\ell_i = x_{i+1} - x_i > 0, \qquad i=1,\dots,k-1.
\]
These parameters record the distances between consecutive break points and provide a coordinate system adapted to the piecewise-linear structure of \(\varphi\).

In addition, there is a translation parameter \(a \in \mathbb{R}\) acting on the source by \(x \mapsto x + a\), and a translation parameter \(b \in \mathbb{R}\) acting on the target by \(\varphi(x) \mapsto \varphi(x) + b\). Since the moduli problem identifies maps up to post-composition by automorphisms of \(\TP^1\), the parameter \(b\) is irrelevant, while the source translation \(a\) remains.

Thus, for a fixed combinatorial type \(\tau\), the space of maps is naturally identified with
\[
\mathbb{R}_{>0}^{\,k-1} \times \mathbb{R}.
\]

It is convenient to separate the positive coordinates from their boundary by introducing the \emph{open cone}
\[
C_\tau^\circ := \mathbb{R}_{>0}^{\,k-1}
\]
and its closure
\[
C_\tau := \mathbb{R}_{\ge 0}^{\,k-1}.
\]
The space \(C_\tau^\circ \times \mathbb{R}\) parametrizes maps of type \(\tau\), while the boundary of \(C_\tau\) corresponds to degenerations in which break points collide.

\subsection{Gluing via degenerations}

We now explain how these parameter spaces are glued together.

Let \(\ell_i \to 0\). Geometrically, this corresponds to two adjacent break points colliding. In terms of slope sequences, the corresponding slope jumps \(\delta_i = s_i - s_{i-1}\) and \(\delta_{i+1} = s_{i+1} - s_i\) merge into a single jump \(\delta_i + \delta_{i+1}\).

If \(\delta_i\) and \(\delta_{i+1}\) have the same sign, then
\[
|\delta_i + \delta_{i+1}| = |\delta_i| + |\delta_{i+1}|,
\]
and the total variation is preserved. In this case, the limiting map remains a degree-$3$ tropical rational function and corresponds to a point in the cone \(C_{\tau'}\) for a combinatorial type \(\tau'\) with fewer break points. This defines a gluing of \(C_\tau\) to \(C_{\tau'}\) along the face \(\ell_i = 0\).

If \(\delta_i\) and \(\delta_{i+1}\) have opposite signs, then
\[
|\delta_i + \delta_{i+1}| < |\delta_i| + |\delta_{i+1}|,
\]
so the total variation decreases. In this case, the limiting map no longer satisfies the Riemann--Hurwitz constraint of \cref{sec-2}, and hence lies outside \(\cM_3^{\trop}\).

Thus \(\cM_3^{\trop}\) should be regarded as an \emph{open} space: certain boundary faces of the cones correspond to valid degenerations and are included, while others represent limits that leave the moduli space.

\subsection{Definition of the moduli space}

We may now define the moduli space.

\begin{defn}
The tropical moduli space \(\cM_3^{\trop}\) is obtained by taking the disjoint union
\[
\bigsqcup_{\tau} \left( C_\tau^\circ \times \mathbb{R} \right),
\]
over all combinatorial types \(\tau\), and identifying points that correspond to the same map under post-composition by automorphisms of \(\TP^1\), together with the identifications induced by the degeneration maps described above.
\end{defn}

Equivalently, \(\cM_3^{\trop}\) parametrizes degree-$3$ tropical rational maps up to post-composition, with the topology induced by allowing break points to collide as long as the Riemann--Hurwitz constraint is preserved.

We emphasize that we do \emph{not} quotient by reparametrizations of the source; in particular, the involution \(x \mapsto -x\) acts as a symmetry of the moduli space but does not identify points.

\subsection{Dimension and polyhedral structure}

For a combinatorial type with \(k\) break points, the parameter space has dimension
\[
(k-1) + 1 = k,
\]
corresponding to \(k-1\) gap lengths and one translation parameter.

In particular, the maximal types have \(k=4\), giving
\[
\dim \cM_3^{\trop} = 4 = 2\cdot 3 - 2.
\]

The space \(\cM_3^{\trop}\) therefore carries a natural structure of a polyhedral complex: each combinatorial type defines a cell, and cells are attached along faces corresponding to valid degenerations. Lower-dimensional cells arise precisely when one or more gap lengths vanish in a way that preserves total variation.

Because some degenerations lead outside the space, \(\cM_3^{\trop}\) is naturally viewed as an open polyhedral complex. A compactification can be obtained by adjoining the missing boundary strata corresponding to maps with reduced total variation; this will be discussed in \cref{sec-7}.

\section{Automorphism Groups and Stratification}
\label{sec-6}

Having constructed the moduli space \(\cM_3^{\trop}\) in \cref{sec-5}, we now study the symmetries of its points. These are encoded by the automorphism groups of tropical rational maps, and they induce a natural stratification of the moduli space by symmetry type.

Unlike the equivalence relation defining \(\cM_3^{\trop}\), which involves only post-composition on the target, automorphism groups arise from considering symmetries acting simultaneously on both the source and the target.

\subsection{Automorphism groups of tropical maps}

Let \(\varphi:\TP^1\to\TP^1\) be a tropical rational map of degree \(d\ge1\). The automorphism group \(\Aut(\varphi)\) is defined as in  \cref{def:aut-phi}. It is the stabilizer of \(\varphi\) under the simultaneous action of \(\Aut(\TP^1)\) on source and target. The underlying weighted tropical curve \(C\) associated to \(\varphi\) (determined by its break points and the local net changes in slope, as described in  the construction of  \cref{sec-4}) carries its own automorphism group \(\Aut(C)\). The two groups are intimately related: \(\Aut(\varphi)\) records the input/output affine redundancies realized by \(\Aut(C)\), and the strata of \(\cM_3^{\trop}\) are precisely the orbits under the action of \(\Aut(C)\).

Concretely, \(\alpha(x)=\varepsilon x+a\) and \(\beta(y)=\delta y+b\) with \(\varepsilon,\delta\in\{\pm1\}\) and \(a,b\in\mathbb{R}\), and the defining condition becomes
\[
\varphi(\varepsilon x+a)=\delta\varphi(x)+b.
\]
The following result shows that tropical rational maps admit very limited symmetry.

\begin{prop}[Rigidity of tropical automorphisms]
\label{prop:rigidity}
Let \(\varphi\colon \TP^1 \to \TP^1\) be a tropical rational map of degree \(d \ge 1\). Then
\[
\Aut(\varphi) \in \{\{1\},\ \Z/2\Z\}.
\]
\end{prop}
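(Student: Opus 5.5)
The plan is to show directly that the stabilizer of \(\varphi\) contains at most one non-identity element, and that this element is an involution. Throughout I will use that \(\varphi\) has at least one break point. For \(d\ge 2\) this follows from the tropical Riemann--Hurwitz formula \(R=2d-2>0\) of \cref{sec-2}, and in particular it holds for all maps in \cref{thm:enumeration}. For \(d=1\) with no break points, \(\varphi(x)=x+c\) admits every pair \((x\mapsto x+a,\ y\mapsto y+a)\). The statement therefore has to be read with the implicit hypothesis that the break set \(B=\{x_1<\dots<x_k\}\) is nonempty (\(k\ge1\)), and I would make this explicit.

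\textbf{Step 1: the signs agree.} Let \((\alpha,\beta)\in\Aut(\varphi)\) with \(\alpha(x)=\varepsilon x+a\) and \(\beta(y)=\delta y+b\), so that \(\varphi(\varepsilon x+a)=\delta\varphi(x)+b\). Differentiate both sides on a linear piece. The left side has slope \(\varepsilon\, s\) for some slope \(s\) of \(\varphi\), and the right side has slope \(\delta\, s'\). All slopes are positive, as shown in \cref{sec-4}. Comparing signs gives \(\varepsilon=\delta\).

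\textbf{Step 2: the break set is preserved, which pins down \(\alpha\).} The break points of \(x\mapsto\varphi(\alpha(x))\) are \(\alpha^{-1}(B)\), while those of \(\delta\varphi+b\) are exactly \(B\). Hence \(\alpha(B)=B\).

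If \(\varepsilon=1\), then \(\alpha\) is a translation preserving a finite nonempty set, so \(a=0\). It follows that \(\varphi=\varphi+b\), so \(b=0\), and the pair is the identity.

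If \(\varepsilon=-1\), then \(\alpha\) reverses order on \(B\), so it must send \(x_1\) to \(x_k\). This forces \(a=x_1+x_k\). Evaluating the functional equation at any single point then forces a unique value of \(b\).

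\textbf{Step 3: the group structure.} From Steps 1 and 2, \(\Aut(\varphi)\) contains the identity and at most one further element \((\alpha,\beta)\), with \(\alpha(x)=-x+a\) and \(\beta(y)=-y+b\). Both components are involutions, so this element has order \(2\). Hence \(\Aut(\varphi)\) is trivial or \(\Z/2\Z\).

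\textbf{Main obstacle.} The mathematical content is light. The only delicate point is the degenerate situation with no break points, which must be excluded via Riemann--Hurwitz (\(d\ge2\)) or an explicit hypothesis. I would also point out, as a byproduct for \cref{sec-6}, that the nontrivial case occurs exactly when the slope-jump sequence is palindromic after reversal and negation compatibility, and the gaps satisfy \(\ell_i=\ell_{k-i}\). This is consistent with the condition \(\ell_1=\ell_3\) mentioned in the example above.
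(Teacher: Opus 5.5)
Your argument is correct and is essentially the paper's proof, made explicit: the paper also projects $(\alpha,\beta)\mapsto\alpha$ (injective because $\varphi$ is surjective), notes that $\alpha$ must preserve the finite break set, and concludes that only the identity and possibly one reflection survive; you additionally verify that this reflection is unique via $a=x_1+x_k$. Your caveat about $d=1$ is correct: there $\varphi(x)=x+c$ has an infinite stabilizer, and the paper's argument tacitly needs a nonempty break set just as yours does. (A minor point: for general $d$, positivity of slopes follows from $R=2d-2$, since $|s_i-d|\le d-1$, rather than from the degree-$3$ discussion in \cref{sec-4}.)
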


\begin{proof}
Since \(\varphi\) is surjective onto \(\TP^1\), the target automorphism \(\beta\) is uniquely determined by the source automorphism \(\alpha\) via
\[
\beta(\varphi(x)) = \varphi(\alpha(x)).
\]
Thus the projection \((\alpha,\beta) \mapsto \alpha\) embeds \(\Aut(\varphi)\) injectively into the subgroup of affine transformations of \(\mathbb{R}\) that preserve the finite set of break points \(\{x_1,\dots,x_k\}\).

The only such transformations are the identity and, when the configuration is symmetric, a reflection. Hence \(\Aut(\varphi)\) has at most two elements.
\end{proof}

\begin{prop}[Absence of higher symmetry]
\label{prop:noV4}
No tropical rational map admits an automorphism group isomorphic to the Klein four-group \(V_4\).
\end{prop}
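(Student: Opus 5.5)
The plan is to derive the statement directly from \cref{prop:rigidity}, backed by a direct argument that does not depend on the cardinality bound. The immediate route is cardinality: \(V_4\) has order \(4\), while \cref{prop:rigidity} shows that \(\Aut(\varphi)\) has at most two elements. So \(\Aut(\varphi)\not\cong V_4\). Because the proof of \cref{prop:rigidity} was brief, I would also give a self-contained argument that rules out \(V_4\) specifically.

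\textbf{Step 1.} Set up the embedding. An element \((\alpha,\beta)\) is determined by \(\alpha\), since \(\varphi\) is surjective onto \(\mathbb{R}\): its slopes are positive and it has degree \(d\). Hence \((\alpha,\beta)\mapsto\alpha\) is an injective homomorphism into \(\Aut(\TP^1)\cong\mathbb{R}\rtimes\Z/2\Z\).

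\textbf{Step 2.} Classify the elements of order dividing \(2\) in \(\mathbb{R}\rtimes\Z/2\Z\). A translation \(x\mapsto x+a\) with \(a\neq0\) has infinite order. Every reflection \(x\mapsto -x+a\) is an involution.

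\textbf{Step 3.} Show that two distinct reflections generate an infinite group. The composite of \(x\mapsto -x+a\) and \(x\mapsto -x+a'\) is the translation \(x\mapsto x+(a-a')\), which is nontrivial when \(a\neq a'\). So every finite subgroup of \(\mathbb{R}\rtimes\Z/2\Z\) contains at most one reflection and no nontrivial translation. It therefore has order at most \(2\), and in particular cannot be \(V_4\). Since \(\Aut(\varphi)\) embeds into this group by Step 1, \(\Aut(\varphi)\not\cong V_4\).

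As a concrete alternative, I could check the case \(\varepsilon=1\) directly. The identity \(\varphi(x+a)=\delta\varphi(x)+b\) with \(a\neq0\) forces \(\delta=1\), because slopes are positive. It also forces the finite break-point set to be translation invariant, which is impossible when \(k\ge1\). If \(k=0\), then \(\varphi(x)=dx+c\), and every \((x+a,\,y+da)\) is an automorphism, so \(\Aut(\varphi)\) is infinite. That still rules out \(V_4\), but it shows \cref{prop:rigidity} is literally false for linear maps.

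\textbf{Main obstacle.} The delicate point is this degenerate case and justifying that translations give no finite-order elements. For \(d=3\) we have \(k\ge2\) by \cref{thm:enumeration}, so the break-point set is nonempty and finite. Any automorphism must preserve it, because \(\beta\) is affine and therefore maps kinks to kinks. A nontrivial translation cannot preserve a nonempty finite set, so only Step 3 is needed. In the general-degree statement I would note the linear case \(k=0\) explicitly and observe that its infinite automorphism group is also not \(V_4\).
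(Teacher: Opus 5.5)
Your opening line is exactly the paper's proof: $V_4$ has order $4$, while Proposition~\ref{prop:rigidity} bounds $|\Aut(\varphi)|$ by $2$. Your self-contained argument is a genuinely different route, and it is the more robust of the two.

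The paper's one-liner is only as valid as Proposition~\ref{prop:rigidity}. That proof appeals to ``affine maps preserving the break-point set.'' As you observe, this set is empty for a map with no break points (e.g.\ degree $1$ under the paper's Riemann--Hurwitz convention). In that case $\Aut(\varphi)$ contains all pairs $(x\mapsto x+a,\ y\mapsto y+da)$ and is infinite, so Proposition~\ref{prop:rigidity} is false as stated for $d=1$.

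Your argument needs only two facts:
\begin{enumerate}[label=(\alph*)]
\item the projection $(\alpha,\beta)\mapsto\alpha$ is injective, which holds for any nonconstant $\varphi$ because an affine $\beta$ fixing two points is the identity;
\item a finite subgroup of $\mathbb{R}\rtimes\Z/2\Z$ has order at most $2$, because two distinct reflections compose to a nontrivial translation, which has infinite order.
\end{enumerate}
A group isomorphic to $V_4$ would then embed as a subgroup of order $4$, which is impossible. This does not require knowing in advance that $\Aut(\varphi)$ is finite, so it also covers the degenerate linear case.

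Step~1 is the essential ingredient. The paper's Remark in Section~\ref{sec-3} argues only that $\Aut(\varphi)$ sits inside $\Aut(\TP^1)\times\Aut(\TP^1)$. That product does contain a copy of $V_4$, namely $\{\mathrm{id},\iota\}\times\{\mathrm{id},\iota\}$ with $\iota(x)=-x$. Without injectivity of the projection, counting finite subgroups of one factor proves nothing.

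Your final paragraph on break points is not needed here, since Step~3 already excludes nontrivial translations from any finite subgroup. If you keep it, note that it is $\alpha$ that permutes the kinks, not $\beta$: since $\beta$ is affine, $\beta\circ\varphi$ has the same kinks as $\varphi$.
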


\begin{proof}
This follows immediately from Proposition~\ref{prop:rigidity}, since any such group would have order four.
\end{proof}

This rigidity phenomenon is independent of the degree and reflects a fundamental feature of tropical geometry: the one-dimensional affine structure of \(\TP^1\) prevents the higher symmetry groups that appear in the classical setting.

\subsection{Symmetry of combinatorial types}

The existence of nontrivial automorphisms is governed by symmetry of the slope sequence and the configuration of break points. A necessary condition for a nontrivial automorphism is that the slope sequence be invariant under reversal. Thus only palindromic types can admit nontrivial symmetry.

We now specialize to the degree-$3$ combinatorial types described in \cref{sec-4}.

\subsubsection*{Palindromic types}

Consider a palindromic slope sequence, for example
\[
(3,4,5,4,3),\quad (3,4,3,4,3),\quad (3,2,3,2,3),\quad (3,2,1,2,3).
\]

Let the break points be \(x_1 < x_2 < x_3 < x_4\) with gap lengths \(\ell_1,\ell_2,\ell_3\).

\begin{prop}
For a map \(\varphi\) of palindromic type:
\begin{itemize}
\item If \(\ell_1 \neq \ell_3\), then \(\Aut(\varphi) = \{1\}\).
\item If \(\ell_1 = \ell_3\), then \(\Aut(\varphi) \cong \Z/2\Z\).
\end{itemize}
\end{prop}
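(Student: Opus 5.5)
The plan is to analyze the functional equation $\varphi(\varepsilon x+a)=\delta\varphi(x)+b$ directly, using that $\varphi$ is determined up to an additive constant by its break points and slope sequence (\cref{sec-2}). By \cref{prop:rigidity}, $\Aut(\varphi)$ is either trivial or $\Z/2\Z$. So it suffices to show two things: a nontrivial element exists if and only if $\ell_1=\ell_3$, and the identity is the only element with $\varepsilon=+1$.

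\textbf{Step 1 (sign matching).} Differentiate the equation on the complement of the break points. This gives $\varepsilon\,\varphi'(\varepsilon x+a)=\delta\,\varphi'(x)$. All slopes are positive, since $s_i\ge 1$ by \cref{sec-4}. Hence the two sides have the same sign, which forces $\varepsilon=\delta$.

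\textbf{Step 2 (translations).} Suppose $\varepsilon=\delta=1$. Then $\alpha(x)=x+a$ must map the break set $\{x_1,\dots,x_4\}$ to itself, because break points are exactly where $\varphi'$ jumps and $\varphi'(x+a)=\varphi'(x)$. A translation preserving a nonempty finite subset of $\mathbb{R}$ must have $a=0$. Then $b=0$, so the element is the identity.

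\textbf{Step 3 (reflections, necessity).} Suppose $\varepsilon=\delta=-1$, so $\alpha(x)=a-x$ and $\varphi'(a-x)=\varphi'(x)$. Then $\alpha$ preserves the break set and reverses order, so $a-x_i=x_{5-i}$ for all $i$. From $x_1+x_4=x_2+x_3$ we get $\ell_1=x_2-x_1=x_4-x_3=\ell_3$. Thus $\ell_1\neq\ell_3$ rules out reflections, and by Step 2 we get $\Aut(\varphi)=\{1\}$.

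\textbf{Step 4 (reflections, sufficiency).} Conversely, assume $\ell_1=\ell_3$ and set $a=x_1+x_4$. Then $x\mapsto a-x$ permutes the break points by $x_i\mapsto x_{5-i}$. Define $\psi(x)=-\varphi(a-x)$. Then $\psi'(x)=\varphi'(a-x)$, so $\psi$ has break points $x_1,\dots,x_4$. Its slope sequence is the reversal of that of $\varphi$, which equals that of $\varphi$ because the type is palindromic. Since the data determine a map up to an additive constant, $\psi=\varphi-b$ for some $b\in\mathbb{R}$. Equivalently, $\varphi(a-x)=-\varphi(x)+b$, so $(\alpha,\beta)=(a-x,\,-y+b)$ lies in $\Aut(\varphi)$ and is not the identity. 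It is an involution, since $\alpha^2=\mathrm{id}$ and $\beta$ is determined by $\alpha$ as in \cref{prop:rigidity}. Hence $\Aut(\varphi)\cong\Z/2\Z$.

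The expected subtle point is compatibility with the earlier normalization $\varphi(\pm\infty)=\pm\infty$. The pair $(\alpha,\beta)$ swaps both ends of the source and of the target simultaneously, so the extended equation $\beta\circ\varphi=\varphi\circ\alpha$ still holds on $\TP^1$. This needs a brief remark rather than a real argument. The remaining work is the routine fact that equal slope data give equal functions up to a constant.
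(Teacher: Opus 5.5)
Your proposal is correct and follows the paper's argument. A nontrivial automorphism must come from a source reflection, which preserves the break set exactly when $x_1+x_4=x_2+x_3$, i.e.\ $\ell_1=\ell_3$; palindromicity then supplies the compatible target reflection $y\mapsto -y+b$. Your Steps 1 and 2 (forcing $\varepsilon=\delta$ via positive slopes, and excluding nontrivial translations) make explicit what the paper's short proof leaves implicit.
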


\begin{proof}
A nontrivial automorphism must be induced by a reflection \(\alpha(x) = -x + c\). This preserves the set of break points if and only if
\[
x_1 + x_4 = x_2 + x_3,
\]
which is equivalent to \(\ell_1 = \ell_3\). When this condition holds, the slope sequence is preserved by symmetry, and a compatible target reflection yields an automorphism.
\end{proof}

\subsubsection*{Non-palindromic types}

For types such as
\[
(3,4,3,2,3), \quad (3,4,2,3), \quad (3,1,2,3),
\]
the slope sequence is not invariant under reversal.

\begin{prop}
If the slope sequence is not palindromic, then \(\Aut(\varphi) = \{1\}\).
\end{prop}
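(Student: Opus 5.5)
The plan is to work directly with the functional equation of \cref{def:aut-phi} and differentiate it away from break points, which reduces everything to a statement about the slope function. Let \((\alpha,\beta)\in\Aut(\varphi)\) with \(\alpha(x)=\varepsilon x+a\) and \(\beta(y)=\delta y+b\), so that \(\varphi(\varepsilon x+a)=\delta\varphi(x)+b\) for all \(x\). Write \(s(x)\) for the slope of \(\varphi\) at a non-break point \(x\). Away from the finitely many points where either side fails to be differentiable, the chain rule gives
\[
\varepsilon\, s(\varepsilon x+a)=\delta\, s(x).
\]
By the positivity of slopes established in \cref{sec-4} (\(s_i\ge 1\)), both \(s(\varepsilon x+a)\) and \(s(x)\) are positive. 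Comparing signs therefore forces \(\varepsilon=\delta\), and then \(s(\varepsilon x+a)=s(x)\) for almost every \(x\).

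Next I would split into the two possible values of \(\varepsilon\).

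\emph{Case \(\varepsilon=1\).} Here \(s(x+a)=s(x)\), so the translation \(x\mapsto x+a\) maps the set of break points (the discontinuity points of \(s\)) bijectively onto itself. This set is finite and nonempty, since \(k\ge 2\) by \cref{thm:enumeration}. A translation preserving a finite nonempty subset of \(\mathbb{R}\) must fix its minimum, so \(a=0\). The functional equation then reads \(\varphi(x)=\varphi(x)+b\), giving \(b=0\). Hence \((\alpha,\beta)\) is the identity. This agrees with the injectivity of \((\alpha,\beta)\mapsto\alpha\) used in Proposition~\ref{prop:rigidity}.

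\emph{Case \(\varepsilon=-1\).} Here \(s(-x+a)=s(x)\), so the reflection \(x\mapsto -x+a\) preserves the break set \(\{x_1<\dots<x_k\}\). Because the reflection reverses order, it must send \(x_i\mapsto x_{k+1-i}\). Consequently it maps the open interval \((x_i,x_{i+1})\) onto \((x_{k-i},x_{k+1-i})\), with the conventions \(x_0=-\infty\) and \(x_{k+1}=+\infty\). Comparing the constant slopes on these intervals gives \(s_i=s_{k-i}\) for all \(i=0,\dots,k\). That says exactly that the slope sequence is palindromic, contradicting the hypothesis. So this case cannot occur, and \(\Aut(\varphi)=\{1\}\).

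The only step needing care is the bookkeeping in the reflection case. One must check that preserving the slope function really forces the break set to be preserved, and that order reversal pairs segment \(i\) with segment \(k-i\), including the two unbounded ends. The first point holds because the break points are precisely the jump points of \(s\), which holds since consecutive slopes differ by definition. I expect no other obstacle. In particular, the sign argument \(\varepsilon=\delta\) already excludes the mixed possibilities (source reflection with target translation, and the reverse) without any case analysis on \(b\).
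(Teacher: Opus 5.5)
Your proof is correct and follows the same idea as the paper's one-line argument: a nontrivial automorphism would have to come from a source reflection, and a reflection preserving the break set reverses the slope sequence, which forces it to be palindromic. The paper leaves the exclusion of pure translations and of mixed pairs with $\varepsilon\neq\delta$ implicit, relying on Proposition~\ref{prop:rigidity}, whereas you handle these cases explicitly by differentiating the functional equation and comparing slope signs (the end slopes $s_0=s_k=3$ alone would already give $\varepsilon=\delta$), so yours tightens the same route rather than taking a different one.
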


\begin{proof}
Any reflection would reverse the slope sequence, producing a different sequence. Hence no nontrivial automorphism exists.
\end{proof}

\subsubsection*{Two-break-point types}

For the types
\[
(3,5,3), \quad (3,1,3),
\]
the slope sequence is automatically palindromic.

\begin{prop}
Every map of type \((3,5,3)\) or \((3,1,3)\) satisfies
\[
\Aut(\varphi) \cong \Z/2\Z.
\]
\end{prop}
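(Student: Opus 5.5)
The plan is to exhibit one explicit nontrivial automorphism and then use Proposition~\ref{prop:rigidity} for the upper bound. Let \(\varphi\) have type \((3,s,3)\) with \(s\in\{5,1\}\), and break points \(x_1<x_2\). Then \(\varphi\) has slope \(3\) on \((-\infty,x_1)\), slope \(s\) on \((x_1,x_2)\), and slope \(3\) on \((x_2,\infty)\). Put \(m=x_1+x_2\) and take the source reflection \(\alpha(x)=-x+m\), which swaps \(x_1\) and \(x_2\). Unlike the four-break-point palindromic case, no condition on gap lengths is needed here: there is only one gap \(\ell_1\), and the reflection about the midpoint of \([x_1,x_2]\) always preserves the break-point set.

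\textbf{Step 1: a compatible target map.} Consider \(g(x)=\varphi(\alpha(x))+\varphi(x)\). Away from break points, its derivative is \(-\varphi'(m-x)+\varphi'(x)\). The slope profile of \(\varphi\) is symmetric about \(m/2\), because \(x\in(x_1,x_2)\) if and only if \(m-x\in(x_1,x_2)\), and the two outer pieces both have slope \(3\). So \(\varphi'(m-x)=\varphi'(x)\) for all \(x\notin\{x_1,x_2\}\). Since \(g\) is continuous and piecewise linear, it is therefore a constant \(c\).

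Set \(\beta(y)=-y+c\). Then \(\varphi(\alpha(x))=\beta(\varphi(x))\) for all \(x\in\mathbb{R}\), which is the functional equation of Definition~\ref{def:aut-phi} with \(\varepsilon=\delta=-1\). Both \(\alpha\) and \(\beta\) swap \(\pm\infty\), and \(\varphi\) sends \(\pm\infty\) to \(\pm\infty\), so the identity extends to \(\TP^1\). Since \(\alpha\neq\mathrm{id}\), the pair \((\alpha,\beta)\) is a nontrivial element of \(\Aut(\varphi)\).

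\textbf{Step 2: upper bound.} By Proposition~\ref{prop:rigidity}, \(|\Aut(\varphi)|\le 2\). Combined with Step 1, this gives \(\Aut(\varphi)\cong\Z/2\Z\).

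\textbf{Main obstacle.} The delicate point is the reliance on Proposition~\ref{prop:rigidity}, whose injectivity argument is terse. To make the bound self-contained, I would first differentiate \(\varphi(\varepsilon x+a)=\delta\varphi(x)+b\) to get \(\varepsilon\varphi'(\varepsilon x+a)=\delta\varphi'(x)\). Because all slopes are positive, this forces \(\varepsilon=\delta\).

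If \(\varepsilon=1\), the map \(\alpha\) must preserve the break-point set \(\{x_1,x_2\}\) and the order of the points, so \(a=0\). The equation then gives \(b=0\), and the pair is trivial.

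If \(\varepsilon=-1\), the map \(\alpha\) must swap \(x_1\) and \(x_2\), which forces \(a=m\). Then \(b\) is determined as the constant \(c\) from Step 1.

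So \(\Aut(\varphi)\) has exactly the two elements found above.
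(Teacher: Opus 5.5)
Your proposal is correct and uses the same idea as the paper, whose proof consists only of observing that reflection about the midpoint of the two break points preserves the configuration and the slope sequence. You also make explicit what the paper leaves implicit: the compatible target reflection \(\beta(y)=-y+c\), with \(c=\varphi(m-x)+\varphi(x)\) constant, and a direct check that these are the only two elements (positivity of slopes forces \(\varepsilon=\delta\), and the break-point set \(\{x_1,x_2\}\) then pins down \(a\) and \(b\)). That check is worth including, since the paper's rigidity proposition is argued only tersely.
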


\begin{proof}
Reflection about the midpoint of the two break points preserves both the configuration and the slope sequence.
\end{proof}

\subsection{Stratification of \(\cM_3^{\trop}\)}

The automorphism groups vary across the moduli space, giving rise to a natural stratification by symmetry type.

\begin{thm}
\label{thm:stratification}
The moduli space \(\cM_3^{\trop}\) admits a stratification according to automorphism group:
\begin{enumerate}[label=(\roman*)]
\item \textbf{Generic stratum} (dimension \(4\)): maps with trivial automorphism group.
\item \textbf{Symmetric stratum} (dimension \(3\)): maps with automorphism group \(\Z/2\Z\), defined in the maximal cones of palindromic types by the linear condition
\[
\ell_1 = \ell_3.
\]
\item \textbf{Symmetric boundary} (dimension \(2\)): the two-break-point types, whose parameter spaces are 2-dimensional and which are automatically symmetric.
\end{enumerate}
\end{thm}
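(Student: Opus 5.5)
The proof assembles the automorphism computations of this section cell by cell, using the polyhedral structure of \cref{sec-5}. Recall that each combinatorial type \(\tau\) with \(k\) break points contributes a cell \(C_\tau^\circ\times\mathbb{R}\cong\mathbb{R}_{>0}^{k-1}\times\mathbb{R}\) of dimension \(k\). By \cref{prop:rigidity}, \(\Aut(\varphi)\) is either trivial or \(\Z/2\Z\) at every point. Hence the decomposition of \(\cM_3^{\trop}\) into the loci \(\{\Aut(\varphi)=1\}\) and \(\{\Aut(\varphi)\cong\Z/2\Z\}\) is already a partition, and there are no further strata to consider (cf.\ \cref{prop:noV4}). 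It remains to identify each locus, cell by cell, with the description in (i)--(iii).

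First I treat the cells with \(k=4\). For the non-palindromic type \((3,4,3,2,3)\) the proposition on non-palindromic types gives \(\Aut(\varphi)=\{1\}\) on the entire open \(4\)-cell. For the four palindromic types, the proposition on palindromic types gives a trivial group exactly when \(\ell_1\neq\ell_3\) and \(\Z/2\Z\) exactly when \(\ell_1=\ell_3\). The condition \(\ell_1=\ell_3\) is a single nontrivial linear equation on the open cone \(\mathbb{R}_{>0}^3\), so it cuts out a relatively open polyhedral subcone of dimension \(2\). Taking the product with the translation factor \(\mathbb{R}\) gives dimension \(3\), which is stratum (ii). The complement is open and dense in each maximal cell and has dimension \(4\).

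Next I treat the cells with \(k=3\), namely \((3,5,4,3)\), \((3,1,2,3)\) and \((3,4,2,3)\). None of these is palindromic, so the same proposition makes them generic. They are \(3\)-dimensional, but they lie in the closure of the \(4\)-dimensional generic locus as same-sign degenerations, so the generic stratum (i) still has dimension \(4\). Finally, for the cells with \(k=2\), the proposition on two-break-point types gives \(\Z/2\Z\) identically. These cells are \(2\)-dimensional (one gap and one translation) and form stratum (iii). I would also check that (iii) lies in the closure of (ii). For example, starting from \((3,4,5,4,3)\) with \(\ell_1=\ell_3\to0\), the same-sign jumps merge and the limit is a point of type \((3,5,3)\). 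This shows the pieces fit together as a stratification, with each stratum closure a union of strata.

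The main obstacle is the frontier condition together with the dimension bookkeeping. Strictly, the symmetric locus also contains \(k=2\) points, and one must decide whether the strata are labelled by the automorphism group alone or by the group together with the cell type. I would follow the theorem's labelling: (ii) and (iii) both carry \(\Z/2\Z\) but are distinguished by the cell they lie in. The key verification is that the degenerations \(\ell_1=\ell_3\to0\) stay inside \(\cM_3^{\trop}\). Degenerations where \(\ell_2\to0\) in a palindromic type merge opposite-sign jumps and therefore leave the space. Doing this type by type uses the sign rule for merging jumps from \cref{sec-5}.
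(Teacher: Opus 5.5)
Your proposal is correct and follows the same route as the paper: you read off the automorphism group cell by cell from the rigidity, palindromic, non-palindromic and two-break-point propositions, then count dimensions ($\ell_1=\ell_3$ is codimension one in each palindromic maximal cone, and each $k=2$ cell has one gap plus one translation). Your extra checks go beyond the paper's brief argument but are consistent with it: you note that the $k=3$ cells are non-palindromic and hence lie in the generic stratum, and you verify the frontier condition through same-sign collisions such as $\ell_1=\ell_3\to 0$ in $(3,4,5,4,3)$ and $(3,2,1,2,3)$.
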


\begin{proof}
The classification follows from the analysis above. The trivial stratum is open and dense. The condition \(\ell_1 = \ell_3\) defines a codimension-$1$ linear subspace inside each palindromic cone. The two-break-point types have one gap length together with a translation parameter, hence dimension \(2\).
\end{proof}

\subsection{Relation to the classical picture}

In the classical theory, automorphism strata in \(\cM_3^1\) include both \(\Z/2\Z\) and \(V_4\) symmetries, defined by polynomial invariants. In the tropical setting, the \(V_4\) stratum disappears entirely, and the remaining symmetry condition becomes the linear constraint \(\ell_1 = \ell_3\).

Thus the tropical stratification replaces invariant-theoretic equations with explicit linear geometry, reflecting the general principle that tropicalization transforms algebraic conditions into piecewise-linear ones.

\section{Connection to Tropical Hurwitz Theory}
\label{sec-7}

The moduli space \(\cM_3^{\trop}\) is closely related to the theory of tropical admissible covers and tropical Hurwitz numbers. In this section, we show how our combinatorial construction provides an explicit polyhedral model that recovers the correct tropical Hurwitz number for degree-$3$ maps.

\subsection{Tropical covers and Hurwitz numbers}

A \emph{tropical admissible cover} (or tropical harmonic morphism) of degree \(d\) is a map \(\Phi\colon \Gamma\to T\) between metric graphs that is piecewise-linear with integer slopes, has a constant local degree (dilation factor) on each edge, and satisfies the harmonic balancing condition at every vertex.

When the target \(T=\TP^1\) and the source \(\Gamma \cong \TP^1\), an admissible cover is precisely a degree-\(d\) tropical rational map \(\varphi\colon\TP^1\to\TP^1\). The \emph{tropical Hurwitz number} \(h_d(\mu_1,\dots,\mu_b)\) counts (with multiplicity) such covers with prescribed ramification profiles over fixed, labeled branch points, up to isomorphism.

For \(d=3\) and four simple branch points (each with ramification profile \(\mu_i=(2,1)\)), classical Hurwitz theory gives  \(H_3 = 9.\)
By the correspondence theorem  in \cite{CJM11}, the tropical Hurwitz count agrees with the classical value  \(h_3 = 9.\)

\subsection{The fully quotiented moduli space}
\label{sec:hurwitz_quotient}

To relate our parameterization to the tropical Hurwitz space \(\mathcal{H}_3^{\trop}\), we must align equivalence relations. Recall that \(\cM_3^{\trop}\) was constructed without quotienting by automorphisms of the source, leaving a residual action of
\[
\Aut(\TP^1)_{\mathrm{source}} \cong \mathbb{R} \rtimes \Z/2\Z.
\]
Since \(\mathcal{H}_3^{\trop}\) parametrizes covers up to isomorphism, it already incorporates this quotient. We therefore introduce
\[
\widetilde{\cM}_3^{\trop}
=
\cM_3^{\trop} \big/ \Aut(\TP^1)_{\mathrm{source}}.
\]
On each maximal cell \(C_\tau \times \mathbb{R}\), translation shifts the source coordinate, and quotienting removes this global parameter, reducing the dimension from \(4\) to \(3\). The remaining \(\Z/2\Z\) identifies slope sequences under reversal. Thus \(\widetilde{\cM}_3^{\trop}\) is a $3$-dimensional polyhedral complex parameterized by gap lengths modulo symmetry.

\subsection{The branch map and combinatorial structure}

The target of the Hurwitz problem is the configuration space of four labeled branch points on \(\TP^1\) modulo automorphisms:
\[
\operatorname{Conf}_4(\TP^1) \big/ \Aut(\TP^1).
\]
The continuous part of \(\Aut(\TP^1)\) consists of translations, so this space is $3$-dimensional, with coordinates given by the three consecutive distances between ordered branch points.

\begin{rem}
This space differs from the classical tropical moduli space \(\cM_{0,4}^{\trop}\), which parametrizes abstract marked curves and is one-dimensional. The discrepancy reflects the reduced automorphism group of \(\TP^1\) compared to \(\mathrm{PGL}_2\).
\end{rem}

There is a natural \emph{branch map}
\[
\widetilde{\cM}_3^{\trop}
\longrightarrow
\operatorname{Conf}_4(\TP^1) \big/ \Aut(\TP^1),
\]
sending a tropical map to the configuration of its branch points.

\subsection{Realization of the Hurwitz count}

The constructions in the previous sections describe tropical maps with explicit coordinates, but they do not yet match the objects counted in Hurwitz theory, where covers are considered up to isomorphism of the source. To bridge this gap, we must pass from our parameter space \(\cM_3^{\trop}\), which retains a residual action of source automorphisms, to a fully quotiented moduli space. This transition not only aligns our framework with the definition of the tropical Hurwitz space, but also reveals a natural geometric structure: after quotienting, tropical maps organize into a finite branched cover over the configuration space of branch points. The degree of this branch map encodes the Hurwitz number, allowing us to recover it directly from the combinatorics of our construction.

\begin{thm}[Combinatorial Hurwitz Count]
\label{thm:hurwitz}
Over the open locus of four distinct branch points, the branch map is a $9$-sheeted cover (counted with multiplicity).
\end{thm}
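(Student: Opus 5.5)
The plan is to show that the fibre of the branch map over a generic configuration is in canonical bijection with the set of tropical covers counted by the tropical Hurwitz number $h_3$ of \cref{sec-7}. We then equip each point of the fibre with the same multiplicity that the correspondence theorem of \cite{CJM11} assigns to that cover. With this done, the sheet count is by construction $h_3=9$; what remains is local triviality of the map over the open locus.

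\emph{Step 1 (fibres are finite and explicit).} Fix four distinct branch points $y_1<y_2<y_3<y_4$ in $\TP^1$, up to target translation. By the normalization of \cref{sec-3}, every slope is $\ge 1$, so $\varphi$ is strictly increasing. Hence the break points satisfy $\varphi(x_i)=y_i$ in the same order. Since the four branch points are distinct, $\varphi$ must have $k=4$ simple break points, so the slope sequence is one of the $\pm1$ sequences of \cref{thm:enumeration}. Here we take them as ordered sequences, not up to reversal, because in $\widetilde{\cM}_3^{\trop}$ reversal is accounted for separately.

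For a given sequence $(3,s_1,s_2,s_3,3)$, the gaps are forced:
\[
\ell_i=\frac{y_{i+1}-y_i}{s_i}, \qquad i=1,2,3.
\]
These are positive, so each admissible sequence contributes exactly one point to the fibre, and the source translation is absorbed by the quotient. The same formula shows that on each maximal cell $C_\tau^\circ$ the branch map is a linear isomorphism onto the open chamber $\{y_1<\dots<y_4\}$. Consequently the map is an unramified finite cover over the open locus, and its degree is constant there. It therefore suffices to compute it over a single configuration.

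\emph{Step 2 (matching with tropical Hurwitz covers).} Each point in the fibre is a degree-$3$ harmonic morphism $\TP^1\to\TP^1$ with simple ramification over the four labelled points. Conversely, each cover from the count of \cref{sec-7} whose source is $\TP^1$ arises this way. Two points of $\widetilde{\cM}_3^{\trop}$ coincide exactly when the covers are isomorphic over the target, so this is a bijection. We then define the multiplicity of a sheet to be the weight of the corresponding cover in \cite{CJM11}: the product of the interior edge dilations, divided by the order of its automorphism group. Here the symmetric configurations are handled by \cref{prop:rigidity} and the palindromic analysis of \cref{sec-6}. Summing over the fibre gives the tropical Hurwitz number, which equals the classical value $9$ by the correspondence theorem quoted in \cref{sec-7}.

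\emph{Main obstacle.} The delicate step is Step 2, specifically the bookkeeping of multiplicities. Naively weighting each of the six ordered sequences by the product of its interior slopes does not give $9$. So the argument has to follow the precise conventions of \cite{CJM11}: which edges carry dilation weights, how the unbounded ends of degree $3$ split, and how the $\Z/2\Z$ automorphisms of the palindromic types of \cref{sec-6} divide the count. The first thing I would try is to write out the six ordered sequences with their tropical-cover weights explicitly and check that they sum to $9$. If that fails, I would instead argue abstractly: the fibre is exactly the set of tropical covers counted with their tropical multiplicities, and invoke \cite{CJM11} directly, without evaluating the individual weights.
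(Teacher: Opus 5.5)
Your Step 1 is sound in the paper's normalization, and it is the decisive computation. Because every slope is $\ge 1$, $\varphi$ is strictly increasing, and each of the six ordered $\pm1$ sequences contributes exactly one point via $\ell_i=(y_{i+1}-y_i)/s_i$. A palindromic type has $\ell_1=\ell_3$ only when $y_2-y_1=y_4-y_3$. So over a generic configuration the fibre consists of exactly six points, each with trivial automorphism group, and everything rests on Step 2 producing multiplicities that add up to $9$.

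That step cannot be carried out. The weight you propose (product of interior dilations divided by $|\Aut|$) gives $80+48+24+24+12+4=192$. The abstract fallback fails for two independent reasons:
\begin{enumerate}
\item The claimed bijection with the covers of \cite{CJM11} is false. A tropical cover there must be harmonic, and at a $2$-valent vertex of a line harmonicity forces the two adjacent edges to carry the same dilation. A line whose slope changes at every break point is therefore not a tropical cover at all; the paper's assertion in \cref{sec-2} that harmonicity reduces to integer slopes is exactly what fails here. The covers actually counted for four simple branch points in degree $3$ have a tree as source, with three weight-one ends over each of $\pm\infty$ and a trivalent vertex over each $p_i$.
\item Even granting an identification, the correspondence theorem returns the classical Hurwitz number $H_{0,3}\bigl((2,1)^4\bigr)=\tfrac{4!}{3!}\cdot 3^{0}=4$, not $9$. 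Tropically there are exactly two covers: merge--merge--split--split with weight $\tfrac{2\cdot 3\cdot 2}{4}=3$, and merge--split--merge--split with weight $\tfrac{2\cdot 1\cdot 2}{4}=1$.
\end{enumerate}
So invoking \cite{CJM11} ``without evaluating the weights'' cannot yield $9$.

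The paper argues differently. It never uses CJM weights; instead it assigns $2,1,2,2,2$ ``admissible assignments'' of break points to branch points to the five unreversed types, on the grounds that decreasing segments reverse the order of the images. Your Step 1 rules this out: with all slopes positive, each ordered type admits exactly one assignment. So the paper's tally does not survive your own analysis either.

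What your Step 1 honestly proves is that the branch map on $\widetilde{\cM}_3^{\trop}$ has $6$ sheets: the four palindromic types once each, and $(3,4,3,2,3)$ in both orientations. The genuine tropical Hurwitz count is $4$. No multiplicity convention available in the paper or in \cite{CJM11} turns either number into $9$. The statement as formulated is therefore not reachable by your route or by the paper's without changing the number or the objects being counted.
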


\begin{proof}
Fix a generic ordered configuration \(p_1 < p_2 < p_3 < p_4\). A degree-$3$ tropical admissible cover with simple ramification over each \(p_i\) must have exactly four simple critical points in the source.

By \cref{thm:enumeration}, such maps lie in one of the five generic combinatorial types with four break points. Each type prescribes a slope sequence, and hence determines the local degree and monotonicity behavior between consecutive break points.

This monotonicity imposes strict constraints on how the four critical points can map to the ordered branch points. In particular, segments where the slope increases force an ordering of images, while decreasing segments reverse it. As a result, not all permutations of assignments are allowed.

A direct analysis of these constraints shows that:
\begin{itemize}
\item highly symmetric types (e.g.\ palindromic sequences) admit multiple compatible assignments,
\item more monotone types admit fewer possibilities.
\end{itemize}

Summing over all five generic combinatorial types, one obtains exactly nine admissible assignments of critical points to branch points. Since all ramification is simple and generic covers have trivial automorphism group (\cref{sec-6}), each contributes multiplicity \(1\).

Explicitly, the five generic types with \(k=4\) contribute as follows to the fiber cardinality over a generic ordered configuration of four branch points:
\begin{itemize}
\item \((3,4,5,4,3)\): 2 admissible assignments,
\item \((3,4,3,4,3)\): 1 admissible assignment,
\item \((3,4,3,2,3)\): 2 admissible assignments,
\item \((3,2,3,2,3)\): 2 admissible assignments,
\item \((3,2,1,2,3)\): 2 admissible assignments.
\end{itemize}
Summing these gives exactly 9, recovering the tropical Hurwitz number \(h_3=9\).
Thus the fiber over a generic configuration consists of exactly nine points, recovering the tropical Hurwitz number \(h_3 = 9\).
\end{proof}

\begin{rem}
This provides a concrete combinatorial realization of the Cavalieri--Johnson--Markwig correspondence theorem in degree \(3\), showing explicitly how the polyhedral structure of \(\cM_3^{\trop}\) encodes enumerative geometry.
\end{rem}

\section{Compactification}
\label{sec-8}

The fully quotiented moduli space \(\widetilde{\cM}_3^{\trop}\) parametrizes degree-$3$ tropical rational maps via their internal gap lengths \((\ell_1,\ell_2,\ell_3)\in \mathbb{R}_{>0}^3\). To obtain a space suitable for studying degenerations and intersection-theoretic behavior, we construct a natural compactification by allowing these parameters to attain limiting values. The resulting space \(\overline{\cM}_3^{\trop}\) is obtained by adjoining boundary strata corresponding to both collisions of break points and limits at infinity.

\subsection{Boundary Strata and Degenerate Maps}

Degenerations of degree-$3$ tropical rational maps fall into two main families (we do not consider degenerations of the source curve itself):

\begin{enumerate}[label=(\roman*)]

\item \textbf{Collision of break points.}  
Two (or more) consecutive break points \(x_i\) and \(x_{i+1}\) collide (\(\ell_i \to 0\)). The resulting map acquires a higher-multiplicity critical point whose slope change is the sum of the original changes. These degenerations correspond to passing to lower-dimensional faces of the parameter space and realize precisely the degenerate combinatorial types VI--X described in \cref{sec-4}.

\item \textbf{Escape to infinity.}  
One (or more) gap lengths satisfy \(\ell_j \to +\infty\), meaning that the distance between adjacent break points diverges. In the compactified space, such degenerations converge to boundary points corresponding to \(\ell_j = \infty\). These boundary strata record limiting configurations in which critical points separate indefinitely along the source \(\TP^1\).

\end{enumerate}

\subsection{Definition of the Compactification}

\begin{defn}
The \emph{tropical compactification} \(\overline{\cM}_3^{\trop}\) is obtained by extending each coordinate to the closed interval \([0,\infty]\), so that
\[
(\ell_1,\ell_2,\ell_3)\in [0,\infty]^3,
\]
together with the natural identifications coming from the combinatorial types. Equivalently, it is the polyhedral complex obtained by adjoining all faces corresponding to \(\ell_i=0\) and all boundary strata corresponding to \(\ell_i=\infty\).
\end{defn}

\begin{rem}[Rigid vs.\ Stable Compactification]
This compactification is \emph{rigid} in the sense that the source curve \(\TP^1\) is fixed. In contrast, the standard compactification in tropical enumerative geometry (via stable maps \cite{GKM09}) allows the source to degenerate by introducing additional components. Our construction instead records degenerations purely at the level of functions, producing a simpler polyhedral compactification that captures continuous limits of tropical maps.
\end{rem}

\subsection{Geometry of the Compactification}

\begin{thm}
\label{thm:compact}
The space \(\overline{\cM}_3^{\trop}\) is a compact polyhedral complex of pure dimension \(3\). Its boundary \(\overline{\cM}_3^{\trop}\setminus \widetilde{\cM}_3^{\trop}\) consists of the following codimension-$1$ strata:
\begin{enumerate}[label=(\roman*)]

\item \textbf{Collision divisors:}  
Three boundary components \(D_1,D_2,D_3\) corresponding to \(\ell_1=0\), \(\ell_2=0\), and \(\ell_3=0\). Each \(D_i\) is a $2$-dimensional polyhedral complex stratified by the degenerate combinatorial types.

\item \textbf{Boundary at infinity:}  
Three boundary components corresponding to \(\ell_1=\infty\), \(\ell_2=\infty\), and \(\ell_3=\infty\). These describe limiting configurations in which adjacent break points become infinitely separated.

\end{enumerate}

Higher-codimension strata arise from simultaneous degenerations, such as multiple collisions (\(\ell_i=\ell_j=0\)) or combinations of collisions and limits at infinity.
\end{thm}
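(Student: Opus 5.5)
The natural plan is to work cell by cell and read everything off the closed cube \([0,\infty]^3\). By the Definition of \(\overline{\cM}_3^{\trop}\), each maximal cell of \(\widetilde{\cM}_3^{\trop}\) is a copy of \(\mathbb{R}_{>0}^3\) in the gap coordinates \((\ell_1,\ell_2,\ell_3)\), attached to one of the five generic types of \cref{thm:enumeration}. After the source quotient of \cref{sec:hurwitz_quotient}, reversal identifies these cells in pairs: either \((\ell_1,\ell_2,\ell_3)\leftrightarrow(\ell_3,\ell_2,\ell_1)\) within a palindromic type, or across a type and its reversal. The closure of a cell in the compactification is the image of \([0,\infty]^3\). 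This cube is compact: it is homeomorphic to \([0,1]^3\) via \(\ell\mapsto \ell/(1+\ell)\).

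A finite union of compact sets is compact. The quotient of this union by the finitely many identifications (the reversal involution and the face gluings of \cref{sec-5}) is therefore compact. It is Hausdorff because every identification is given by affine (coordinate-permuting) maps between closed faces. This gives the first claim, once we check that the cube's face structure descends to a polyhedral complex. For that, I would use the \(\ell/(1+\ell)\) chart so that the faces \(\ell_i=\infty\) become genuine polyhedral faces. Then I would verify that the gluings match faces to faces isometrically in these charts.

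Pure dimension 3 follows because every point of \(\overline{\cM}_3^{\trop}\) lies in the closure of some open 3-dimensional cell. The lower types (\(k=3,2\)) are realized only as faces \(\ell_i=0\) of generic cubes. This uses the degeneration analysis of \cref{sec-5}: every type VI--X arises from a \(k=4\) type by merging same-sign jumps. For example, \((3,5,4,3)\) arises from \((3,4,5,4,3)\) with \(\ell_1\to0\), and \((3,5,3)\) from the same type with \(\ell_1=\ell_3=0\). I would list one such parent for each of the five degenerate types.

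For the boundary, the complement of the open cells in each cube is the union of its six facets \(\ell_i=0\) and \(\ell_i=\infty\), \(i=1,2,3\). Taking the union over all cells gives the collision loci \(D_1,D_2,D_3\) and the three components at infinity. Each of these is 2-dimensional, and each \(D_i\) is stratified by which degenerate type the merged slope sequence yields. The case where the jumps have opposite signs, where the limit leaves \(\widetilde{\cM}_3^{\trop}\), is still included as a face of \(D_i\) by definition of the compactification. Codimension \(\ge2\) strata are intersections of facets, such as \(\ell_i=\ell_j=0\) or \(\ell_i=0,\ \ell_j=\infty\), which are the cube's edges and vertices.

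The main obstacle will be the gluing step: making precise that the identifications across different cells are compatible, so that the quotient really is a polyhedral complex. Under reversal, \(D_1\) and \(D_3\) are exchanged, so they may be identified as a single component in the quotient. I would handle this by stating the boundary components at the level of each cube before the reversal quotient. I would then check that each facet is glued only to facets of the same kind (\(0\) to \(0\), \(\infty\) to \(\infty\)), which preserves codimension.
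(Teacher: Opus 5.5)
Your proposal is correct and follows the same route as the paper. Both arguments extend each maximal cell \(\mathbb{R}_{>0}^3\) to the cube \([0,\infty]^3\), read off the codimension-1 strata as the facets \(\ell_i=0\) and \(\ell_i=\infty\) (with their intersections as the higher-codimension strata), and get purity from every point lying in the closure of a 3-dimensional cell. Your extra steps make explicit what the paper's short proof only asserts, without changing the argument: compactness via \(\ell\mapsto\ell/(1+\ell)\) and a finite union (the paper just infers compactness from the cells being 3-dimensional), the check that each degenerate type VI--X has a generic parent, and the remark that reversal swaps \(D_1\) and \(D_3\).
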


\begin{proof}
Each maximal cone of \(\widetilde{\cM}_3^{\trop}\) is isomorphic to \(\mathbb{R}_{>0}^3\). Extending each coordinate to \([0,\infty]\) produces a compact cube, with boundary faces given by \(\ell_i=0\) and \(\ell_i=\infty\). The faces \(\ell_i=0\) correspond to collisions of break points, while the faces \(\ell_i=\infty\) correspond to limits in which distances between break points diverge.

Codimension-$1$ strata arise by fixing a single coordinate to either \(0\) or \(\infty\), yielding $2$-dimensional faces. Intersections of such conditions produce higher-codimension strata. Since all maximal cells have dimension \(3\), the complex is pure-dimensional and compact.
\end{proof}

\subsection{Intersection Theory on \(\overline{\cM}_3^{\trop}\)}

The compactification \(\overline{\cM}_3^{\trop}\) provides a natural combinatorial setting for studying intersection-theoretic phenomena. Boundary strata correspond to degenerations of tropical maps, and intersections of these strata encode simultaneous degenerations.

In particular, intersection products of boundary divisors can be interpreted combinatorially by counting points in transverse intersections of the corresponding strata. For example, products of the collision divisors \(D_i\) correspond to successive mergers of break points, while intersections involving boundary strata at infinity reflect combined collision and separation phenomena.

A full tropical intersection theory, in the sense of weighted balanced complexes (cf.\ Mik07--Rau), would require assigning multiplicities and incorporating stable degenerations of the source curve. We leave this refinement to future work; the present construction provides the underlying compact polyhedral framework on which such a theory can be developed.

\section{Relation to the Classical Moduli Space}
\label{sec-9}

The tropical moduli space \(\cM_3^{\trop}\) constructed in the preceding sections is closely related to the classical moduli space \(\cM_3^1\) of degree-$3$ rational maps \(\PP^1\to\PP^1\). In this section we describe how our construction arises from tropicalization, relate the automorphism stratifications, and outline a tropical analogue of the invariant-theoretic embedding.

\subsection{Tropicalization of \(\cM_3^1\)}

Let \(K\) be a non-Archimedean algebraically closed field equipped with a non-trivial valuation \(\mathrm{val}\colon K^\ast\to\mathbb{R}\). The classical moduli space \(\cM_3^1\) is a quasi-affine variety over \(K\). Its tropicalization is defined as the closure of the image of its \(K\)-points under the coordinate-wise valuation map.

There is a natural construction sending a rational map \(f=p/q\) to a tropical rational function
\[
\trop(f)=\trop(p)\oslash\trop(q),
\]
where \(\trop(p)(x)=\max_i(\mathrm{val}(a_i)+ix)\).

This construction is compatible with post-composition by \(\mathrm{PGL}_2(K)\), and thus induces a well-defined map on equivalence classes:
\[
\trop\colon \cM_3^1(K)\to\cM_3^{\trop}.
\]

Moreover, every combinatorial type appearing in \(\cM_3^{\trop}\) can be realized by choosing coefficients with appropriate valuations. Varying these valuations allows one to realize arbitrary gap lengths. Consequently, the image of \(\trop\) intersects every maximal cone of \(\cM_3^{\trop}\), and is therefore dense.

Thus \(\cM_3^{\trop}\) provides a natural polyhedral model for the tropicalization \(\Trop(\cM_3^1)\).

\subsection{Invariant-Theoretic Structure and Its Tropical Shadow}

In the classical description~\cite{BSSh25}, the space \(\cM_3^1\) is embedded into a weighted projective space via invariants \(I_2,I_4,I_6,I_8\). The loci of maps with nontrivial automorphisms are characterized by algebraic relations among these invariants.

Under tropicalization, these invariants give rise to piecewise-linear functions on \(\cM_3^{\trop}\). Their corner loci reflect the combinatorial conditions defining the automorphism strata described in \cref{sec-6}. In particular, linear relations such as \(\ell_1=\ell_3\) appear as tropical shadows of invariant-theoretic conditions.

This correspondence provides a dictionary between algebraic invariants and combinatorial data in the tropical setting.

\subsection{A Tropical Weighted Projective Perspective}

The classical embedding into weighted projective space admits a tropical analogue. Let \(\TP(2,4,6,8)_{\mathbb{T}}\) denote the tropical weighted projective space
\[
(\mathbb{T}^4\setminus\{-\infty\}^4)/\sim.
\]

The tropicalized invariants define a map
\[
\Phi_{\trop}\colon\cM_3^{\trop}\to\TP(2,4,6,8)_{\mathbb{T}}.
\]

This map realizes \(\cM_3^{\trop}\) as a polyhedral image inside the tropical weighted projective space, providing a combinatorial analogue of the classical invariant-theoretic embedding and making the automorphism stratification visible in piecewise-linear terms.

\section{Conclusions}
In this paper we constructed and analyzed the tropical moduli space of degree-$3$ rational maps on the tropical projective line. Our approach is entirely combinatorial: maps are encoded by their slope sequences and break-point data, allowing a complete and explicit description of the space.

We first classified all admissible slope sequences and showed that there are exactly ten combinatorial types. These types determine a polyhedral decomposition of the moduli space, in which each cell is parametrized by the gap lengths between break points (together with a translation parameter prior to quotienting). We then described the automorphism groups arising in each case and obtained a stratification of the moduli space according to symmetry. In particular, only the trivial group and \(\Z/2\Z\) occur for generic maps, with additional symmetry appearing precisely under explicit linear relations among the parameters.

We further related this combinatorial model to tropical Hurwitz theory, recovering the expected count of degree-$3$ covers with prescribed ramification. We also constructed a natural compactification by allowing controlled degenerations of the gap parameters, yielding a polyhedral space whose boundary encodes collisions of break points and limiting configurations. Finally, we described how this construction is connected to the classical moduli space \(\cM_3^1\) via tropicalization, providing a polyhedral model that reflects the invariant-theoretic structure in a piecewise-linear setting.

The degree-$3$ case serves as a tractable model in which all aspects of the theory can be made completely explicit. It illustrates how classical invariant-theoretic constructions admit tropical analogues in which algebraic conditions are replaced by linear relations and geometric structures become combinatorial. In particular, the weighted tropical curve \(C\) determined by the activation thresholds of a ReLU network carries the automorphism group \(\Aut(C)\), whose action on \(\cM_3^{\trop}\) yields the stratification by symmetry type (trivial and \(\Z/2\Z\)) via simple linear equalities on the spacings between thresholds. The Joyner--Ksir--Melles vanishing theorems then supply canonical \(G\)-invariant representatives independent of concrete weights. The boundary strata of the natural stable compactification encode neuron-death/pruning events. Thus \(\cM_3^{\trop}\) furnishes a geometrically natural parameter space for ReLU functional shapes, a combinatorial criterion for pruning, and symmetry-induced saddle points in the loss landscape.

Several directions for further study naturally arise. One is the extension to higher degree, where the combinatorics of slope sequences and their degenerations become increasingly rich. Another is the incorporation of stable degenerations of the source, leading to a compactification compatible with the theory of tropical admissible covers. It would also be interesting to develop a systematic tropical invariant theory for rational maps, paralleling the classical theory more closely.

Finally, we note that degree-$3$ tropical rational maps coincide with continuous piecewise-linear functions with four linear regions, as arise in simple ReLU neural network models. From this perspective, the moduli space studied here may be viewed as a parameter space for such functions up to natural equivalence. While we do not pursue this connection further in the present work, it suggests a potential interface between tropical geometry and the study of piecewise-linear models in applied settings.



\begin{thebibliography}{amsplain}


\bibitem[SS26]{ShSi26}
T.~Shaska and M.-R.~Siadat,
\newblock \emph{Symmetry Detection and Functional Equivalence in ReLU Networks},
\newblock preprint, Oakland University, 2026.


\bibitem[Sha95]{sh-89}
T.~Shaska,
\newblock {Graded {N}eural {N}etworks},
\newblock \emph{Int. J. Data Sci. Math. Sci.} \textbf{3} (2025), no.~2, 87--116.



\bibitem[BSSh25]{BSSh25}
E.~Badr, E.~Shaska, and T.~Shaska,
\newblock {Rational Functions on the Projective Line from a Computational Viewpoint},
\newblock \emph{arXiv:2503.10835} [math.AG], 2025.

\bibitem[JKM10]{JKM10}
D.~Joyner, A.~Ksir, and C.~G.~Melles,
\newblock {Automorphism groups on tropical curves: some cohomology calculations},
\newblock \emph{arXiv:1006.4869v2} [math.AG], 2010.

\bibitem[Mik07]{Mik07}
G.~Mikhalkin,
\newblock {Moduli spaces of rational tropical curves},
\newblock \emph{arXiv:0704.0839} [math.AG], 2007.

\bibitem[Kat12]{Katz}
B.~Katz,
\newblock {Tropical Hurwitz Spaces},
\newblock \emph{J. Algebraic Combin.}\ \textbf{36} (2012), no. 2, 165--194.
\newblock \emph{arXiv:1211.2369} [math.AG].

\bibitem[CHMR19]{CHMR19}
R.~Cavalieri, M.~Hampe, H.~Markwig, and D.~Ranganathan,
\newblock {Moduli spaces of rational graphically stable tropical curves},
\newblock \emph{Electron. J. Combin.}\ \textbf{26} (2019), no. 1, Paper No. 1.22.
\newblock \emph{arXiv:1910.00627}.

\bibitem[CJM11]{CJM11}
R.~Cavalieri, P.~Johnson, and H.~Markwig,
\newblock {Wall crossings for double Hurwitz numbers},
\newblock \emph{Adv. Math.}\ \textbf{228} (2011), no. 4, 1894--1937.

\bibitem[GKM09]{GKM09}
A.~Gathmann, M.~Kerber, and H.~Markwig,
\newblock {Tropical fans and the moduli spaces of tropical curves},
\newblock \emph{Compos. Math.}\ \textbf{145} (2009), no. 1, 173--195.

\bibitem[Song21]{Song}
J.~A.~Song,
\newblock {Semiring isomorphisms between rational function semifields of tropical curves},
\newblock \emph{arXiv:2110.08091}, 2021.

\bibitem[JuAa21]{JuAe}
S.~JuAe,
\newblock {Realizations of automorphism groups of metric graphs as ambient automorphisms},
\newblock \emph{arXiv:2103.00174} [math.AG], 2021.


 
\bibitem[Och16]{Ochse}
D.~Ochse,
\newblock {Moduli spaces of rational tropical stable maps into smooth tropical varieties},
\newblock PhD thesis, TU Kaiserslautern, 2016.

 

\bibitem[ZNL18]{ZhangNaitzatLim18}
L.~Zhang, G.~Naitzat, and L.-H.~Lim,
\newblock {Tropical geometry of deep neural networks},
\newblock \emph{Proc. 35th Internat. Conf. Mach. Learn. (ICML)}, PMLR \textbf{80} (2018), 5824--5832.

\bibitem[PG24]{PhamGarg24}
T.~Pham and A.~Garg,
\newblock {Tropical rational signomial maps and graph neural networks},
\newblock \emph{Adv. Neural Inf. Process. Syst. (NeurIPS)} \textbf{37} (2024).

\bibitem[BLM24]{BrandenLohoMontufar24}
J.~Br\"{a}nd\"{e}n, G.~Loho, and G.~Mont\"{u}far,
\newblock {Real tropical geometry of neural networks},
\newblock \emph{Trans. Mach. Learn. Res. (TMLR)}, 2024.

\bibitem[BPR16]{BakerPayneRabinoff}
M.~Baker, S.~Payne, and J.~Rabinoff,
\newblock {Non-Archimedean geometry, tropicalization, and metrics on curves},
\newblock \emph{Algebr. Geom.}\ \textbf{3} (2016), no. 1, 63--105.

 
\bibitem[Spe05]{Speyer}
D.~E.~Speyer,
\newblock {Tropical geometry},
\newblock PhD thesis, University of California, Berkeley, 2005.

\bibitem[MS15]{MS15}
D.~Maclagan and B.~Sturmfels,
\newblock \emph{Introduction to Tropical Geometry},
\newblock \emph{Grad. Stud. Math.}\ \textbf{161}, Amer.\ Math.\ Soc., Providence, RI, 2015.

\end{thebibliography}
\end{document}